\let\@fnsymbol\@arabic
\newcommand{\ed}{\stackrel{(d)}{=}}
\def\p{\mathbb{P}}
\def\e{\mathbb{E}}
\def\ind{\mbox{\rm 1\hspace{-0.04in}I}}
\theoremstyle{plain}
\newtheorem{theorem}{Theorem}
\newtheorem{lemma}{Lemma}
\newtheorem{proposition}{Proposition}
\theoremstyle{definition}
\newtheorem{definition}{Definition}
\newtheorem{remark}{Remark}
\numberwithin{equation}{section}
\title{On $\mathbb{R}^d$-valued multi-self-similar Markov processes}
\author{Lo\"i{}c Chaumont\footnote{LAREMA UMR CNRS 6093, Universit\'e d'Angers, 2, Bd Lavoisier 
Angers Cedex 01, 49045, France. Email: loic.chaumont@univ-angers.fr}  \and 
Salem Lamine\thanks{University of Monastir, Faculty of sciences, Monastir and LAREMA UMR CNRS 6093, Universit\'e d'Angers, 
2, Bd Lavoisier Angers Cedex 01, 49045, France. Email: salem.lamine@hotmail.fr}}
\date{\today}
\begin{document}

\maketitle

\begin{abstract} 
 An $\mathbb{R}^d$-valued Markov process $X^{(x)}_t=(X^{1,x_1}_t,\dots,X^{d,x_d}_t)$, 
$t\ge0,x\in\mathbb{R}^d$ is said to be multi-self-similar with index $(\alpha_1,\dots,\alpha_d)\in[0,\infty)^d$ if the identity 
in law 
\[(c_iX_t^{i,x_i/c_i};i=1,\dots,d)_{t\ge0}\ed(X_{ct}^{(x)})_{t\ge0}\,,\] where $c=\prod_{i=1}^dc_i^{\alpha_i}$,
is satisfied for all $c_1,\dots,c_d>0$ and all starting point $x$. Multi-self-similar Markov processes were 
introduced by Jacobsen and Yor \cite{jy} in the aim of extending the Lamperti transformation of positive 
self-similar Markov processes to $\mathbb{R}^d_+$-valued processes. This paper aims at giving a complete description
of all $\mathbb{R}^d$-valued multi-self-similar Markov processes. We show that their state space
is always a union of open orthants with 0 as the only absorbing state and that there is no finite entrance law at 0 for these 
processes. We give conditions for these processes to satisfy the Feller property. Then we show that a Lamperti-type representation 
is also valid for $\mathbb{R}^d$-valued multi-self-similar Markov processes. In particular, we obtain a one-to-one relationship 
between this set of processes and the set of Markov additive processes with values in $\{-1,1\}^d\times\mathbb{R}^d$.  We then 
apply this representation to study the almost sure asymptotic behavior of multi-self-similar Markov processes. 
\end{abstract}

\noindent {\it Keywords}: Multi-self-similarity, Markov additive process, L\'evy process, time change.\\

\noindent {\it AMS MSC 2010}: 60J45

%
%                            ---------- o ----------
%

%%%%%%%%%%%%%%%%%%%%%%%%%%%%%%%%%%%%%%%%%%%%%%%%%%%%%%%%%%%%%%%%%%
\section{Introduction}
%%%%%%%%%%%%%%%%%%%%%%%%%%%%%%%%%%%%%%%%%%%%%%%%%%%%%%%%%%%%%%%%%%

A Markov process $\{(X_{t})_{t\ge0},\p_x\}$ with state space $E\subset\mathbb{R}^d$ satisfies the multi-scaling property of index 
$\alpha=(\alpha_1,\dots,\alpha_d)\in[0,\infty)^d$ if for all $x=(x_1,\dots,x_d)\in E$ and $c=(c_1,\dots,c_d)\in(0,\infty)^d$, 
\begin{equation}\label{8499}
\{(X_{c^\alpha t})_{t\ge0},\p_{c\circ x}\}=\{(c\circ X_t)_{t\ge0},\p_{x}\}\,,
\end{equation}
where $c^\alpha:=\prod_{i=1}^dc_i^{\alpha_i}$ and $\circ$ denotes the Hadamard product, that is $c\circ x:=(c_1x_1,\dots,c_dx_d)$. 
These processes, called multi-self-similar Markov processes, were introduced by Jacobsen and Yor in \cite{jy} in order to extend the 
famous Lamperti representation to $(0,\infty)^d$-valued Markov processes. They proved that any $(0,\infty)^d$-valued multi-self-similar 
Markov process $\{(X^{(1)}_{t},\dots,X^{(d)}_{t})_{t\ge0},\p_x\}$ can be represented as
\begin{equation}\label{3797}
X^{(i)}_t=\exp \xi^{(i)}_{\tau_t}\,,
\end{equation}
where $(\xi^{(1)},\dots,\xi^{(d)})$ is a $d$-dimensional L\'evy process  issued from $(\log x_1,\dots,\log x_d)$ and 
$\tau_t=\inf\{s:\int_0^s\exp(\alpha_1\xi^{(1)}_u+\dots+\alpha_d\xi^{(d)}_u)\,du>t\}$. They also proved that conversely, for any L\'evy 
process $(\xi^{(1)},\dots,\xi^{(d)})$, the transformation (\ref{3797}) defines a $(0,\infty)^d$-valued multi-self-similar Markov process.\\

On the other hand, in the recent work \cite{acgz}, the authors showed that there is a way to extend the Lamperti representation 
to all standard self-similar Markov processes with values in $\mathbb{R}^d\setminus\{0\}$. This extension defines a bijection 
between the set of these processes and this of $S_{d-1}\times \mathbb{R}$-valued Markov additive processes, where 
$S_{d-1}$ is the sphere in dimension $d$. Actually this representation does not provide a complete description of
$\mathbb{R}^d$-valued self-similar Markov processes since it does not give any information on the existence of an entrance law at 0
or a recurrent extension after the first passage time at 0. Regarding these questions, only the real case has been investigated up to 
now, see \cite{ri}, \cite{LT1}, \cite{ppr} and the references therein.\\ 
      
We show in this paper that unlike for self-similar Markov processes, a complete description of $\mathbb{R}^d$-valued 
multi-self-similar Markov processes can be given through a Lamperti-type representation. The multi-scaling property generates quite 
specific properties of the process. In particular any element $x\in\mathbb{R}^d$ such that $x_1x_2\dots x_d=0$ is absorbing and 
the state space can always be reduced to a union of open orthants with 0 as the only absorbing state. It implies that there is no 
continuous multi-self-similar Markov process whose state space covers the whole set $\mathbb{R}^d$. Moreover there is no finite 
multi-self-similar entrance law for these processes. We will then prove that, provided the process has infinite lifetime, the Feller 
property is satisfied on the whole state space, see Theorem~\ref{3457}. These features will be proved in Section \ref{3427} and will 
be used in Section \ref{6200} to show that (\ref{3797}) can be extended for $\mathbb{R}^d$-valued processes, so that
\begin{equation}\label{5727}
X^{(i)}_t=J^{(i)}_{\tau_t}\exp \xi^{(i)}_{\tau_t}\,,
\end{equation}
where $(J^{(i)},\xi^{(i)})_{1\le i\le d}$ is a Markov additive process with values in $\{-1,1\}^d\times\mathbb{R}^d$, and  
$\tau_t=\inf\{s:\int_0^s\exp(\alpha_1\xi^{(1)}_u+\dots+\alpha_d\xi^{(d)}_u)\,du>t\}$, see Theorem \ref{2461}.\\ 

Markov additive processes can be considered as generalizations of $d$-dimensional L\'evy processes. Roughly speaking, 
$(\xi^{(i)})_{1\le i\le d}$  behaves like a new L\'evy process in between each pair of jumps of the continuous time Markov chain 
$(J^{(i)})_{1\le i\le d}$. Together with the representation (\ref{5727}) they provide an easy means to construct many concrete 
examples of multi-self-similar Markov processes. This quite simple structure will also be exploited for the description of their path 
properties. We will show in Subsection \ref{5732}, see Theorem \ref{2774}, that actually these processes cannot reach the set 
$\{x\in\mathbb{R}^d\setminus\{0\}:x_1x_2\dots x_d=0\}$ in a continuous way. Moreover, contrary to self-similar Markov 
processes, the finiteness of their lifetime depends on their scaling index. Then we will describe the behavior of multi-self-similar 
Markov processes on the left neighborhood of their lifetime. In particular, we will study the existence of a limit and give conditions
for this limit to be 0, when it exists.

%%%%%%%%%%%%%%%%%%%%%%%%%%%%%%%%%%%%%%%%%%%%%%%%%%%%%%%%%%%%%%%%%%
\section{General properties of mssMp's}\label{3427}
%%%%%%%%%%%%%%%%%%%%%%%%%%%%%%%%%%%%%%%%%%%%%%%%%%%%%%%%%%%%%%%%%%

%%%%%%%%%%%%%%%%%%%%%%%%%%%%%%%%%%%%%%%%%%%%%%%%%%%%%%%%%%%%%%%%%%
\subsection{The Markovian framework.}\label{6322}
%%%%%%%%%%%%%%%%%%%%%%%%%%%%%%%%%%%%%%%%%%%%%%%%%%%%%%%%%%%%%%%%%%
Let us first set some notation. For $a=(a_1,\dots,a_d)\in[0,\infty)^d$ and $b=(b_1,\dots,b_d)\in\mathbb{R}^d$, we set 
$a^b=a_1^{b_1}\dots a_d^{b_d}$. For $x\in\mathbb{R}^d$, we set $\mbox{sign}(x):=(\mbox{sign}(x_1),\dots,\mbox{sign}(x_d))$, 
where $\mbox{sign}(0)=0$, and for all $s\in\{-1,0,1\}^d$, we define the set,
\begin{equation}\label{3089}
Q_s=\{x\in\mathbb{R}^d:\mbox{sign}(x)=s\}.
\end{equation}
If $s\in\{-1,1\}^d$, then $Q_s$ is called an open orthant. In all the remainder of this work, unless explicitly stated, we will 
assume that $d\ge2$. Let us emphasize that most of our results would not apply for $d=1$. However in the latter case 
multi-self-similar Markov processes coincide with self-similar Markov processes which have already been extensively studied 
in the literature, see \cite{la}, \cite{cpr}, \cite{LT1}, \cite{ri} or \cite{ppr}, for instance.\\

In this subsection, we give a proper definition of multi-self-similar Markov processes and describe the general form of
their state space. Let $E$ be a subset  of $\mathbb{R}^d$ which is locally compact with a countable base and let 
$\mathcal{E}$ be its Borel $\sigma$-field. 
Let 
\[(\Omega,\mathcal{F},(\mathcal{F}_t)_{t\ge0},(X_t)_{t\ge0},(\p_x)_{x\in E})\] 
be a Markov process with values in $(E,\mathcal{E})$, where $(\Omega,\mathcal{F})$ is some measurable space, 
$(\p_x)_{x\in E}$ are probability measures on $(\Omega,\mathcal{F})$ such that $\p_x(X_0=x)=1$, for all $x\in E$
and $(\mathcal{F}_t)_{t\ge0}$ is some filtration to which $X=(X_t)_{t\ge0}$ is adapted and completed with respect to the 
measures $(\p_x)_{x\in E}$. Such a process will be denoted by $\{X,\p_x\}$ and will simply be referred to as an $E$-valued 
Markov process.\\

We will now consider $E$-valued Markov processes which satisfy the multi-scaling property (\ref{8499}). We stress the
fact that this property  supposes the condition: 
\[Q_{\tiny\mbox{sign}(x)}\subset E,\;\;\;\mbox{for all $x\in E$.}\]
Define $T=\inf\{t:X_t\neq X_0\}$. We say that $x\in E$ is a holding state if $\p_x(T>0)=1$ and that it is an absorbing state if 
$\p_x(T=\infty)=1$. 
 
\begin{proposition}\label{2683} 
Let  $\{X,\p_x\}$ be an $E$-valued Markov process. Assume moreover that $\{X,\p_x\}$ is right continuous and satisfies 
$(\ref{8499})$. Then,
\begin{itemize}
\item[$1.$] each $x\in E$ such that $x_1x_2\dots x_d=0$ is an absorbing state. 
\item[$2.$]  If $x\in E$ is a holding sate $($resp.~an absorbing state$)$, then all elements of the set $Q_{\tiny\mbox{\rm sign}(x)}$ 
are holding sates $($resp.~absorbing sates$)$.
\end{itemize}
\end{proposition}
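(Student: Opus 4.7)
The plan is to read the proposition directly off the multi-scaling identity (\ref{8499}) applied to the first exit time $T := \inf\{t : X_t \neq X_0\}$. Writing $T_z$ for $T$ under $\p_z$, the key remark is that $c \circ X_t = c \circ x$ if and only if $X_t = x$, because each $c_i > 0$. Hence (\ref{8499}), applied to the first-passage time out of $c \circ x$ on each side, gives
\[T_{c \circ x} \ed c^\alpha T_x, \qquad c \in (0,\infty)^d,\; x \in E.\]

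Part 2 is then immediate. Given $y \in Q_{\mathrm{sign}(x)}$, set $c_i = y_i/x_i$ when $x_i \neq 0$ and $c_i = 1$ otherwise; this produces $c \in (0,\infty)^d$ with $c \circ x = y$ and $c^\alpha > 0$. The displayed identity then gives $\p_y(T > 0) = \p_x(T > 0)$ and $\p_y(T = \infty) = \p_x(T = \infty)$, so holding and absorbing states remain such throughout $Q_{\mathrm{sign}(x)}$.

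For part 1, apply the same identity with $y = x$. Fix $x$ with $x_i = 0$ for some $i$, put $I := \{k : x_k = 0\}$, and take $c_j = 1$ for $j \notin I$ with $c_j > 0$ free for $j \in I$. Then $c \circ x = x$, and $T_x \ed c^\alpha T_x$ with $c^\alpha = \prod_{j \in I} c_j^{\alpha_j}$. If at least one $\alpha_j$, $j \in I$, is strictly positive, $c^\alpha$ ranges over all of $(0,\infty)$, so the law of $T_x$ on $[0,\infty]$ is scale invariant and therefore concentrated on $\{0, \infty\}$; Blumenthal's 0-1 law then forces $\p_x(T_x = 0) \in \{0,1\}$. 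In the complementary case in which every $\alpha_j$ with $j \in I$ vanishes, evaluating the multi-scaling identity at a fixed time yields $X_t^{(j)} \ed c_j X_t^{(j)}$ for each $j \in I$ and every $c_j > 0$, which forces $X_t^{(j)} = 0 = x_j$ almost surely, and an analogous scaling in the remaining indices propagates this to $X_t = x$.

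The main obstacle is ruling out the instantaneous alternative $T_x = 0$ in the first regime above. I would handle it by examining the marginals of $X_t$ for fixed $t>0$ via (\ref{8499}) one index at a time: with $c_i = \lambda \to 0$ for some $i \in I$ such that $\alpha_i > 0$ and $c_k = 1$ for $k \neq i$, right-continuity forces $X_{\lambda^{\alpha_i} t} \to x$ in probability, while $(c \circ X_t)^{(j)} = X_t^{(j)}$ for $j \neq i$ is unaffected. Passing to the limit pins $X_t^{(j)} = x_j$ almost surely for every $j \neq i$. The delicate step is then eliminating any nontrivial $X_t^{(i)}$; this is where the hypothesis $d \ge 2$ enters, typically through a compensating scaling of a second coordinate so as to vary $c_i$ freely while holding $c^\alpha$ fixed, reducing $X_t^{(i)} \ed c_i X_t^{(i)}$ to the trivial law and thus $X_t = x$ almost surely.
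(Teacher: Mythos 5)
Your exit-time identity $T_{c\circ x}\ed c^{\alpha}T_x$ and the resulting proof of part 2 are correct (the paper disposes of part 2 in one line; your version is a fleshed-out form of the same scaling argument), and in part 1 both the reduction to ruling out $T_x=0$ and the limiting step are sound: with $c_i=\lambda\to 0$, $c_k=1$ for $k\neq i$ and $\alpha_i>0$, the left side of (\ref{8499}) tends to $\delta_x$ while the right side tends to the law of the vector with $0$ in slot $i$ and $X^{(j)}_t$ elsewhere, so $X^{(j)}_t=x_j$ a.s.\ for every $j\neq i$. The genuine gap is exactly the step you flag as delicate, and the fix you sketch does not close it. The compensating scaling $c_i=\lambda$, $c_k=\lambda^{-\alpha_i/\alpha_k}$ does hold $c^{\alpha}$ fixed, but it preserves the starting point only when $c_kx_k=x_k$, i.e.\ when $x_k=0$. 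So it requires a \emph{second} vanishing coordinate $k$ with $\alpha_k>0$; when $x$ has a single vanishing coordinate, every admissible compensation moves the starting point to $c\circ x\neq x$, and (\ref{8499}) then only compares $\p_x$ with $\p_{c\circ x}$, giving no constraint on the law of $X^{(i)}_t$ under $\p_x$. The same objection defeats the last sentence of your ``all $\alpha_j=0$'' case: scaling a non-vanishing coordinate changes the starting point, and indeed $X_t=(y_1e^{B^1_t},y_2e^{B^2_t})$ with $\alpha=(0,0)$ satisfies all the hypotheses while $(0,1)$ is not absorbing.

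Moreover the missing step cannot be supplied from the stated hypotheses. Take $d=2$, $\alpha=(\beta,1)$ with $\beta\in(0,2]$, $E=\mathbb{R}\times(0,\infty)$, and under $\p_{(y_1,y_2)}$ let $X_t=(y_1+Z_{t/y_2},\,y_2)$ with $Z$ a strictly $\beta$-stable L\'evy process issued from $0$. This is a right-continuous Markov process with $Q_{\mathrm{sign}(x)}\subset E$ for all $x\in E$, and since $(Z_{\lambda s})_{s\ge0}\ed(\lambda^{1/\beta}Z_s)_{s\ge0}$ one checks (\ref{8499}) directly; yet $(0,1)$ is not absorbing. (A deterministic variant $X^{(1)}_t=(y_1^{\alpha_1}+t/y_2)^{1/\alpha_1}$ on $[0,\infty)\times(0,\infty)$ does the same job for any $\alpha_1>0$.) For context, the paper's own proof stumbles at precisely this point: its display (\ref{3466}) silently drops the factor $c_1$ from the first coordinate of the right-hand side, which amounts to assuming $X^{(1)}_t=0$. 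Your argument does go through when $x$ has at least two vanishing coordinates whose scaling indices are strictly positive --- then running your limiting step once for each such index already freezes every coordinate, no compensation needed --- and in particular for $x=0$ when $\alpha\in(0,\infty)^d$ has at least two positive entries. But for a point with a single vanishing coordinate, part 1 as stated is simply not a consequence of the hypotheses, so no rearrangement of the scaling identity will complete your proof in the claimed generality.
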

\begin{proof} Let $x\in E$ such that $x_1x_2\dots x_d=0$. With no loss of generality we can assume that $x_1=0$. 
Let $c\in(0,\infty)^d$ such that $c_2=c_3=\dots=c_d=1$. 
Then the multi-scaling property (\ref{8499}) entails that the following identity in law holds for all $c_1>0$ and $t\ge0$,
\begin{equation}\label{3466}
\{(X^{(1)}_{c_1^\alpha t},X^{(2)}_{c_1^\alpha t},\dots,X^{(d)}_{c_1^\alpha t}),\p_{(0,x_2,\dots,x_d)}\}=
\{(X^{(1)}_{t},X^{(2)}_{t},\dots,X^{(d)}_{t}),\p_{(0,x_2,\dots,x_d)}\}\,.\end{equation}
But letting $c_1$ go to 0 and using the fact that $\{X,\p_x\}$ is right continuous at 0, we obtain that for all $t\ge0$,
\[\p_{(0,x_2,\dots,x_d)}((X^{(1)}_{t},X^{(2)}_{t},\dots,X^{(d)}_{t})=(0,x_2,\dots,x_d))=1\,.\]
Therefore the state $x$ must be an absorbing state.

The second assertion follows directly from the multi-scaling property (\ref{8499}).
\end{proof}

\noindent Let $\{X,\p_x\}$ be a Markov process satisfying the conditions of Proposition \ref{2683}. Since all $x\in E$ such that 
$x_1x_2\dots x_d=0$ are absorbing, we can send the process to 0 whenever it reaches the set  $\{x\in E:x_1x_2\dots x_d=0\}$. 
Moreover, from part 2 of Proposition \ref{2683}, if $x\in E$ is an absorbing state such that $x_i\neq0$, for all $i=1,\dots,d$, then
all states of the orthant $Q_{\tiny\mbox{\rm sign}(x)}$  are absorbing. Then we will remove absorbing orthants as well as the 
set $\{x\in E:x_1x_2\dots x_d=0\}\setminus\{0\}$ from the state space so that with no loss of generality we can claim that any right 
continuous Markov process satisfying $(\ref{8499})$ has a state space of the form:
\begin{equation}\label{2423}
E\cup\{0\}\;\;\;\mbox{where}\;\;\;E=\cup_{s\in S} Q_s\,,
\end{equation}
and where $S$ is some subset of $\{-1,1\}^d$. Moreover, 0 is the only absorbing state.\\ 

From now on $E$ will always be a set of the form given in (\ref{2423}). In order to define multi-self-similar Markov processes, we 
need further usual assumptions. In particular, we will consider the set $E_0:=E\cup\{0\}$ as the Alexandroff one-point compactification 
of $E$. This means that the open sets of $E_0$ are those of $E$ and all sets of the form $0\cup K^c$, where $K$ is a compact subset 
of $E$. The later sets form a neighborhood system for 0 and this particular state is called the point at infinity. 
Then $E_0$ endowed with this topology is a compact space.  In particular, for a sequence $x^{(n)}$ of $E_0$, $\lim_{n}x^{(n)}=0$ 
if and only if 
\begin{equation}\label{3697}
\lim_n\min\left(|x_i^{(n)}|,|x_i^{(n)}|^{-1},\,i=1,\dots,d\right)=0\,.
\end{equation}
We denote by $\mathcal{E}_0$, the Borel $\sigma$-fields of $E_0$. 
Let us set  $\zeta:=\inf\{t:X_t=0\}$. The random time $\zeta$ is called the lifetime of $\{X,\p_x\}$, and the latter process is said to 
be absorbed at 0.\\ 

In all the remainder this paper, we will be dealing with Hunt processes which we recall the definition from Section I.9 of \cite{bg} and 
Section A.2 of \cite{fom}. An $E$-valued Markov process $\{X,\p_x\}$ absorbed at 0 is a Hunt process if:
\begin{itemize}
\item[$(i)$] it is a strong Markov process,
\item[$(ii)$] its paths are right continuous on $[0,\infty)$ and have left limits on $(0,\infty)$,
\item[$(iii)$] it has quasi-left continuous paths on $(0,\infty)$.
\end{itemize}
Such a process will be called an $E$-valued Hunt process absorbed at 0. 

\begin{definition} A multi-self-similar Markov process $($mssMp$)$ with index $\alpha\in[0,\infty)^d$ is an $E$-valued Hunt 
process absorbed at $0$, which satisfies the multi-scaling property $(\ref{8499})$. The state space of mssMp's is always of the 
form given in $(\ref{2423})$ and $0$ is the only absorbing state. In the sequel, such a process will simply be referred to as an 
$E$-valued mssMp with index $\alpha\in[0,\infty)^d$.
\end{definition}

\noindent Some examples of mssMp's with state space $E=(0,\infty)^d$ are given in \cite{jy}. For more general sets $E$ of the 
form (\ref{2423}), let us mention the following simple examples.\\ 

\noindent 1) Let $(J_t^{(1)},\dots,J_t^{(d)})_{t\ge0}$ be any continuous time Markov chain with values in some subset $S$ of 
$\{-1,1\}^d$ and starting at $(1,1,\dots,1)\in S$. Let $\alpha\in[0,\infty)^d$ and set $\bar{\alpha}=\alpha_1+\dots+\alpha_d$. 
For each $x\in E$, let $\p_x$ be the probability measure which assigns to the process $X$ the law of 
\begin{equation}\label{6432}
(x_1J_{t/x^{\bar{\alpha}}}^{(1)},\dots,x_dJ_{t/x^{\bar{\alpha}}}^{(d)})\,,\;\;\;t\ge0.
\end{equation}
Then we readily check that $\{X,\p_x\}$ is an $E$-valued mssMp with index $\alpha$.\\

\noindent 2) A slightly more sophisticated example is given by the law of 
\begin{equation}\label{6702}
\left(x_1(1+\bar{\alpha}tx^{-\alpha})^{1/\bar{\alpha}}J^{(1)}_{\ln(1+\bar{\alpha}tx^{-\alpha})^{1/\bar{\alpha}}},\dots,
x_d(1+\bar{\alpha}tx^{-\alpha})^{1/\bar{\alpha}}J^{(d)}_{\ln(1+\bar{\alpha}tx^{-\alpha})^{1/\bar{\alpha}}}\right)\,,\;\;\;t\ge0,
\end{equation} 
where we assume that $\bar{\alpha}>0$. Again the process $\{X,\p_x\}$ is an $E$-valued mssMp with index $\alpha$. \\

\noindent 3) We called our third example "the jumping spider". Let $\{X,\p_x\}$ be a process which starts at time $t=0$, at some 
point $x\in E$ and runs along the axis $(0,x)$ as a reflected Brownian motion. Then at some time before 
hitting 0, the process jumps out to some other state of $y\in E$ and runs in the same way along the 
axis $(0,y)$, and so on. More specifically let $(R_t)_{t\ge0}$ be a reflected Brownian motion independent of the process 
$(J_t^{(1)},\dots,J_t^{(d)})_{t\ge0}$ defined above and such that $R_0=1$, a.s. Let $\alpha$ be such that $\bar{\alpha}=2$ 
and define
\[X^{(i)}_t=\left\{\begin{array}{ll}
x_iJ_{\int_0^{t/x^\alpha}R^{-2}_s\,ds}^{(i)}R_{t/x^\alpha}\,,&0\le t<x^\alpha\zeta,\\
0\,,&t\ge x^\alpha\zeta,
\end{array}\right.\]
where $\zeta=\inf\{s:R_s=0\}$. Then we can check that $\{X,\p_x\}$ is a mssMp with index $\alpha$, which is absorbed at 0,
at time $\zeta$.\\

\noindent Note that in these three examples, the decomposition (\ref{2423}) of the space $E$ is determined by the state space $S$
of the continuous time Markov chain $(J_t^{(1)},\dots,J_t^{(d)})_{t\ge0}$. An extension of the above constructions of mssMp's will 
be given in Section \ref{6200} through a Lamperti type transformation, see Theorem \ref{2461}.

\begin{remark}
It is straightforward from $(\ref{8499})$ that mssMp's of index $\alpha$ are also self-similar Markov processes with index 
$\alpha_1+\dots+\alpha_d$. However multi-self-similarity imparts to Markov processes much richer properties which 
could not be derived from the study of self-similar Markov processes given in \cite{acgz}. See for instance Theorem $\ref{3457}$
and Proposition $\ref{3673}$ below.
\end{remark}

%%%%%%%%%%%%%%%%%%%%%%%%%%%%%%%%%%%%%%%%%%%%%%%%%%%%%%%%%%%%%%%%%%
\subsection{On the Feller property of mssMp's.}\label{3656}
%%%%%%%%%%%%%%%%%%%%%%%%%%%%%%%%%%%%%%%%%%%%%%%%%%%%%%%%%%%%%%%%%%

Proposition \ref{2683} means in particular that given any mssMp $\{X,\p_x\}$, there does not exist any Markov process starting 
from 0 and with the same semigroup as $\{X,\p_x\}$. We will see in the next proposition that actually there is no finite 
multi-self-similar entrance law.\\

Let us now fix some definitions. In what follows, $\{X,\p_x\}$ will always be an $E$-valued mssMp with index $\alpha\in[0,\infty)^d$. 
We will denote by $(P_t)$ the transition function of $\{X,\p_x\}$. An entrance law for $\{X,\p_x\}$ is a family of non zero measures 
$\{\eta_t,t>0\}$ on $E$ satisfying the identity $\eta_sP_t=\eta_{t+s}$, that is for all nonnegative Borel function $f$ defined 
on $E$ and all $s,t>0$,
\[\int_E \e_x(f(X_t),t<\zeta)\,\eta_s(dx)=\int_E f(x)\,\eta_{t+s}(dx)\,.\]
We say that $\{\eta_t,t>0\}$  is a multi-self-similar entrance law if moreover there is a multi-index $\gamma\in[0,\infty)^d$ such that 
for all $c\in(0,\infty)^d$, 
\begin{equation}\label{7995}
\eta_s=c^{-\gamma}\eta_{sc^{-\alpha}}H_c\,,
\end{equation}
where $H_c$ denotes the dilation operator $H_cf(x)=f(c\circ x)$. This definition is the natural extension of self-similar entrance 
laws introduced in the framework of positive self-similar Markov processes in \cite{ri}, see (EL-i) and (EL-ii). 
In this paper the existence of self-similar entrance laws has been fully studied. Then in part 4.2 of \cite{ri} the author
extended his study to the case of mssMp's with values in $E=(0,\infty)^d$ whose radial part tends to infinity. In this particular 
case, the definition of multi-self-similarity of the entrance law corresponds to (\ref{7995}) for $\gamma=0$. An expression for 
the corresponding multi-self-similar entrance law can be found in \cite{ri}. In the next proposition we complete this result by showing 
that there cannot exist any finite such entrance law in general. 

\begin{proposition}\label{2383} 
MssMp's do not admit finite multi-self-similar entrance laws.
\end{proposition}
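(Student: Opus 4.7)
My plan is to exploit the scaling identity (\ref{7995}): first with a constant test function, to pin down the possible values of $\gamma$; then with general sets, to force $\eta_s$ to be invariant under a large subgroup of $(0,\infty)^d$; and finally to observe that such invariance is incompatible with finiteness as soon as $d\ge 2$.

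The first step will be to apply (\ref{7995}) with $f=\ind_E$, which yields
\[
\eta_s(E)=c^{-\gamma}\eta_{sc^{-\alpha}}(E)\qquad\text{for every }c\in(0,\infty)^d.
\]
Specializing to the subgroup $G:=\{c\in(0,\infty)^d:c^\alpha=1\}$ gives $\eta_s(E)(1-c^{-\gamma})=0$. If $\eta_s$ were a finite nonzero measure, then $\eta_s(E)\in(0,\infty)$, forcing $c^\gamma=1$ for every $c\in G$, equivalently $\langle\gamma,u\rangle=0$ for every $u\in\alpha^\perp$. This will give $\gamma=\lambda\alpha$ for some $\lambda\in\mathbb{R}$ (necessarily $\lambda\ge 0$ since $\gamma,\alpha\in[0,\infty)^d$; when $\alpha=0$ one simply gets $\gamma=0$).

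The second step will plug $\gamma=\lambda\alpha$ back into (\ref{7995}) with $c\in G$: then $c^\gamma=(c^\alpha)^\lambda=1$ and the identity collapses to $\eta_s=\eta_s H_c$, i.e.
\[
\eta_s(c^{-1}\circ A)=\eta_s(A)\qquad\text{for every Borel }A\subset E\text{ and every }c\in G.
\]
Thus $\eta_s$ will be invariant under the action of $G$ on $E$. On any orthant $Q_\sigma$ appearing in the decomposition (\ref{2423}) of $E$, the map $\Phi(x)=(\log|x_1|,\dots,\log|x_d|)$ is a homeomorphism from $Q_\sigma$ onto $\mathbb{R}^d$ conjugating the multiplicative $G$-action with translation by $\log G=\alpha^\perp\subset\mathbb{R}^d$. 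The pushforward $\mu:=\Phi_*(\eta_s|_{Q_\sigma})$ will then be a Borel measure on $\mathbb{R}^d$ invariant under translations along the linear subspace $\alpha^\perp$, whose dimension is at least $d-1\ge 1$ (it is all of $\mathbb{R}^d$ when $\alpha=0$).

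To conclude, I will pick an orthant $Q_\sigma$ with $\eta_s(Q_\sigma)>0$, which exists because $E$ is a finite union of orthants and $\eta_s\neq 0$. Any bounded Borel set $B\subset\mathbb{R}^d$ with $\mu(B)>0$ then admits, for any nonzero $v\in\alpha^\perp$ and $N>0$ large enough that $N|v|$ exceeds the diameter of $B$, pairwise disjoint translates $\{B+kNv:k\in\mathbb{Z}\}$. Translation invariance of $\mu$ along $\alpha^\perp$ then yields
\[
\eta_s(Q_\sigma)=\mu(\mathbb{R}^d)\ge\sum_{k\in\mathbb{Z}}\mu(B+kNv)=\sum_{k\in\mathbb{Z}}\mu(B)=+\infty,
\]
contradicting finiteness. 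The only genuinely delicate point—and also the step where the standing assumption $d\ge 2$ is essentially used—is passing from $G$-invariance on a multiplicative orthant to translation invariance on $\mathbb{R}^d$ along a subspace of positive dimension, after which the absence of finite invariant measures is a classical Haar-type observation.
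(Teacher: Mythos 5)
Your proof is correct, and it takes a genuinely different route from the paper's. The paper works with the one--dimensional marginals $\eta_t\circ\pi_i^{-1}$: it combines a scaling in one coordinate with a scaling in another chosen so that the induced time changes $t\mapsto t/a^{\alpha_i}$ coincide, splits into cases according to whether $\alpha_2$ and $\gamma_2$ vanish, and in each case identifies the marginal as a relatively invariant measure on the multiplicative group, i.e.\ one with density proportional to $|x|^{-(\kappa+1)}$, which has infinite mass. You package the time--cancellation step once and for all through the subgroup $G=\{c\in(0,\infty)^d:c^\alpha=1\}$, whose logarithm is $\alpha^{\perp}$ and hence has dimension at least $d-1\ge1$ --- this makes the role of the standing assumption $d\ge2$ completely transparent. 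You first pin down $\gamma\in\mathrm{span}(\alpha)$ by testing against $\ind_E$ (legitimate only because $\eta_s(E)\in(0,\infty)$ under the finiteness hypothesis, which you use correctly), then upgrade to genuine $G$-invariance of $\eta_s$, transfer it to translation invariance along $\alpha^{\perp}$ in logarithmic coordinates on an orthant of positive mass, and finish with the elementary disjoint-translates argument instead of invoking the classification of (relatively) invariant measures on $(0,\infty)$ or $\mathbb{R}\setminus\{0\}$. The underlying mechanism is the same in both proofs --- cancel the time index by holding $c^\alpha$ fixed, then observe that a nonzero measure quasi-invariant under a noncompact group of dilations cannot be finite --- but the paper's computation yields the explicit power-law form of the marginals as a by-product, while yours is more uniform (no case split on the vanishing of $\alpha_i$ or $\gamma_i$) and softer in its endgame.
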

\begin{proof} Let $\{\eta_t,t>0\}$ be a multi-self-similar entrance law for $\{X,\p_x\}$ and let $f$ be any positive Borel
function defined on $E$. Let $t>0$ and $a>0$, then from (\ref{7995}) applied to $c=(1,a,1,\dots,1)$, we obtain 
\begin{equation}\label{5623}
\int_Ef(x_1,\dots,x_d)a^{\gamma_2}\eta_t(dx)=\int_Ef(x_1,ax_2,\dots,x_d)\eta_{t/a^{\alpha_2}}(dx)\,.
\end{equation}
Let $\pi_i:x\mapsto x_i$ be the projection on the $i$-th coordinate and set $U_i=\pi_i(E)$. 
Denote by $\eta'_t=\eta_t\circ\pi_1^{-1}$ the image of $\eta_t$ by $\pi_1$ on $U_1$. It follows from the above identity that 
\begin{equation}\label{3488}
a^{\gamma_2}\eta'_{t}=\eta'_{t/a^{\alpha_2}}\,.
\end{equation} 
If $\alpha_2=0$ and $\gamma_2>0$, then $\eta'_t$ is necessarily the infinite measure. Denote by $\eta''_t=\eta_t\circ\pi_2^{-1}$ 
the image of $\eta_t$ by $\pi_2$ on $U_2$. If $\alpha_2=\gamma_2=0$,
then from (\ref{5623}), $\eta''_t$ satisfies 
\[\int_{U_2}g(x_2)\eta''_t(dx)=\int_{U_2}g(ax_2)\eta''_{t}(dx)\,,\]
for all positive Borel function $g$ defined on $U_2$. Recalling that either $U_2$ or $-U_2$ is the multiplicative group
$\mathbb{R}\setminus\{0\}$ or $(0,\infty)$, the latter identity shows that  if $\eta''_t$ is finite on all compact sets, 
then $\eta''_t$ corresponds to the Haar measure on $U_2$, that is $\eta''_t(dx)=(\mbox{cst})\cdot |x|^{-1}dx$, which has infinite 
mass.\\

Now suppose that $\alpha_2>0$. Then applying (\ref{7995}) again with $c=(a,1,\dots,1)$, we obtain
\[\int_Ef(x_1,\dots,x_d)a^{\gamma_1}\eta_t(dx)=\int_Ef(ax_1,x_2,\dots,x_d)\eta_{t/a^{\alpha_1}}(dx)\,,\]
that is for all positive Borel function $g$ defined on $U_1$,
\begin{equation}\label{3568}
\int_{U_1}g(x_1)a^{\gamma_1}\eta'_t(dx)=\int_{U_1}g(ax_1)\eta'_{t/a^{\alpha_1}}(dx)\,.
\end{equation} 
Then replacing $a$ by $a^{\alpha_1/\alpha_2}$ in (\ref{3488}) gives 
\begin{equation}\label{3412}
a^{\gamma_2\alpha_1/\alpha_2}\eta'_{t}=\eta'_{t/a^{\alpha_1}}\,,
\end{equation} 
so that from (\ref{3568}) and (\ref{3412}) with $\kappa=\gamma_1-\gamma_2\alpha_1/\alpha_2$,
\[\int_{U_1}g(ax_1)\eta'_t(dx)=\int_{U_1}g(x_1)a^{\kappa}\eta'_{t}(dx)\,.\]
But again, either $U_1$ or $-U_1$ is the multiplicative group $\mathbb{R}\setminus\{0\}$ or $(0,\infty)$, hence the 
latter identity shows that if $\eta'_t$ is finite on all compact sets, then $\eta'(dx)=(\mbox{cst})\cdot |x|^{-(\kappa+1)}dx$, which 
has infinite mass.\\ 
\end{proof}

\noindent Knowing the form of the state space of mssMp's and their entrance boundaries, we can now investigate 
their Feller property. Actually there is no universally agreed definition of the Feller property. This varies depending on which 
space the transition function $(P_t)$ of $\{X,\p_x\}$ should be defined. Let $\mathcal{C}_b(E_0)$ (resp.~$\mathcal{C}_b(E)$) 
be the space of continuous and bounded functions on $E_0$ (resp.~$E$). In our case, the most natural definition should 
require the following two conditions:
\begin{itemize}
\item[$(a)$] For all $f\in\mathcal{C}_b(E_0)$ and $t\ge0$, $P_tf\in\mathcal{C}_b(E_0)$.
\item[$(b)$] For all $f\in\mathcal{C}_b(E_0)$, $\lim_{t\rightarrow0}P_tf=f$, uniformly.
\end{itemize}
If the transition function of $\{X,\p_x\}$ satisfies $(a)$ and $(b)$, we say that $\{X,\p_x\}$ is a Feller process on $E_0$. 
As will be seen later on, this property is actually very strong and rarely satisfied by mssMp's. We will actually focus our attention 
on mssMp's with an infinite lifetime. A mssMp $\{X,\p_x\}$ is said to have an infinite lifetime if  $\p_x(\zeta=\infty)=1$, for all $x\in E$. 
When the lifetime is infinite, the restriction of the transition function $(P_t)$ of $\{X,\p_x\}$ to the space 
$E$ is still Markovian. In this case, we say that $\{X,\p_x\}$ has the Feller property on $E$  if it satisfies: 
\begin{itemize}
\item[$(a')$] For all $f\in\mathcal{C}_b(E)$ and $t\ge0$, $P_tf\in\mathcal{C}_b(E)$.
\item[$(b')$] For all $f\in\mathcal{C}_b(E)$, $\lim_{t\rightarrow0}P_tf=f$, uniformly on compact subsets of $E$. 
\end{itemize}
Examples of mssMp's whose lifetime is infinite  can be found in \cite{jy}. 

\begin{theorem}\label{3457}  
Let $\{X,\p_x\}$ be a mssMp with an infinite lifetime. Then, $\{X,\p_x\}$ has the Feller property 
on $E$. 
\end{theorem}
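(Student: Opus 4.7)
The plan is to exploit the multi-scaling identity $(\ref{8499})$ to reduce both $(a')$ and $(b')$ to a single joint-continuity statement. For each sign vector $s\in\{-1,1\}^d$ with $Q_s\subset E$ and each $f\in\mathcal{C}_b(E)$, I would introduce the auxiliary function
\[G_s(c,r):=\e_s\bigl[f(c\circ X_r)\bigr],\qquad (c,r)\in(0,\infty)^d\times[0,\infty),\]
which, by $(\ref{8499})$, satisfies $P_tf(c\circ s)=G_s(c,t/c^\alpha)$ while $f(c\circ s)=G_s(c,0)$. Since $y\mapsto(|y_1|,\dots,|y_d|)$ is a homeomorphism from $Q_s$ onto $(0,\infty)^d$, proving $(a')$ and $(b')$ reduces to establishing, respectively, continuity and uniform continuity on compacts of $G_s$.

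The central step is the joint continuity of $G_s$ on $(0,\infty)^d\times[0,\infty)$. Given $(c_n,r_n)\to(c,r)$, bounded convergence reduces the problem to showing $c_n\circ X_{r_n}\to c\circ X_r$ $\p_s$-almost surely. When $r=0$ this follows from the right continuity of paths at $0$ together with $c_n\to c$. When $r>0$ the crucial input is the quasi-left continuity property $(iii)$ of a Hunt process: combined with right continuity, it implies that any fixed deterministic time $r>0$ is a.s.~not a jump time of $X$, so $X_{r_n}\to X_r$ a.s. I expect this to be the main subtlety, since the whole strength of the Hunt assumption is needed to rule out fixed-time jumps. The hypothesis that the lifetime is infinite ensures $X_r\in E$ a.s., so continuity of $f$ at $c\circ X_r$ is available.

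Condition $(a')$ is then immediate. For any $x_0\in Q_s\subset E$ and any sequence $y_n\to x_0$ in $E$, eventually $y_n\in Q_s$; writing $y_n=c_n\circ s$ and $x_0=c_0\circ s$ with $c_n\to c_0$ in $(0,\infty)^d$, joint continuity of $G_s$ gives $P_tf(y_n)=G_s(c_n,t/c_n^\alpha)\to G_s(c_0,t/c_0^\alpha)=P_tf(x_0)$. Boundedness is preserved since $\|P_tf\|_\infty\le\|f\|_\infty$.

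For $(b')$, take a compact $K\subset E$. Since $E$ is a finite disjoint union of open orthants, $K$ meets only finitely many of them, so one may assume $K\subset Q_s$. Then $K$ corresponds to a compact set $K_c\subset(0,\infty)^d$ on which $c^\alpha$ is bounded above and below by positive constants; hence $t/c^\alpha\to0$ uniformly in $c\in K_c$ as $t\to0$. Since $G_s$ is uniformly continuous on the compact rectangle $K_c\times[0,1]$, one obtains
\[\sup_{y\in K}\bigl|P_tf(y)-f(y)\bigr|=\sup_{c\in K_c}\bigl|G_s(c,t/c^\alpha)-G_s(c,0)\bigr|\xrightarrow[t\to0]{}0,\]
which is the required local uniform convergence.
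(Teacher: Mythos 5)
Your proposal is correct and follows essentially the same route as the paper: both use the multi-scaling identity to transfer the starting point into a scaling factor applied to the process started at a sign vector $s$, invoke the a.s.\ absence of fixed-time jumps of a Hunt process for $(a')$, and combine right-continuity at $0$ with compactness and uniform continuity for $(b')$. Your packaging via the jointly continuous function $G_s(c,r)=\e_s[f(c\circ X_r)]$ is only a cosmetic variant of the paper's pathwise estimate $|P_tf(x)-f(x)|\le\max_{s}\e_s\bigl(\sup_{y\in K_s}|f(S_{c(y)}(X)_t)-f(y)|\bigr)$.
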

\begin{proof} Let us define the scaling operator $S_c$ by 
\[S_c(X)=(c\circ X_{t/{c}^\alpha})_{t\ge0}\,,\]
where $c\in(0,\infty)^d$. Then let $x\in E$ and $x^{(n)}=(x^{(n)}_1,\dots,x^{(n)}_d)$ be any 
sequence of $E$ which converges towards $x\in E$. Recall the form (\ref{2423}) of $E$ and set 
$Q^{(i)}=Q_{\mbox{\tiny sign$(x_i)$}}$ so that $x_i\in Q^{(i)}$. 
Since the $Q^{(i)}$'s are open sets, there is $n_0$ such that for all $n\ge n_0$ and all $i=1,\dots,d$, $x^{(n)}_i\in Q^{(i)}$ and 
$x^{(n)}_i$ has the same sign as $x_i$. Now for $n\ge n_0$, define $c^{(n)}_i=x_{i}^{(n)}/x_i$ and note that 
$c^{(n)}=(c^{(n)}_1,\dots,c^{(n)}_d)\in (0,\infty)^d$. Let $f\in\mathcal{C}_b(E)$, then from (\ref{8499}), for all $t\ge0$,        
\begin{equation}\label{1955}
\e_{x^{(n)}}(f(X_t))=\e_x(f(S_{c^{(n)}}(X)_t))\,.
\end{equation}
Since $c^{(n)}$ tends to 1 and $\{X,\p_x\}$ is almost surely continuous at time $t$, see \cite{bg}, it follows from (\ref{1955}) and
dominated convergence that
\[\lim_{n\rightarrow\infty}\e_{x^{(n)}}(f(X_t))=\e_x(f(X_t))\,.\]
This proves $(a')$.\\

Let us now prove $(b')$. First observe that for all $x\in E$, 
\begin{equation}\label{3453}
\{X,\p_x\}=\{S_{c(x)}(X),\p_{\mbox{\tiny sign}(x)}\}\,,
\end{equation}
where $c_i(x)=|x_i|$.  Let $K$ be some compact subset of $E$. For $s\in\{-1,1\}^d$, define the compact subsets 
$K_s=Q_s\cap K$.
From (\ref{3453}) and the right continuity of $\{X,\p_x\}$ at 0, we have for all $y\in K_s$, 
\[\lim_{t\rightarrow0}|f(S_{c(y)}(X)_t)-f(y)|=0\,,\;\;\mbox{$\p_s$-almost surely,}\]
so that from the uniform continuity of $f$ on $K_s$, 
\[\lim_{t\rightarrow0}\sup_{y\in K_s}|f(S_{c(y)}(X)_t)-f(y)|=0\,,\;\;\mbox{$\p_s$-almost surely.}\]
Then the result follows from the inequality for all $x\in K$,
\[|P_tf(x)-f(x)|\le\max_{s\in\{-1,1\}^d}\e_s(\sup_{y\in K_s}|f(S_{c(y)}(X)_t)-f(y)|)\,,\]
the boundeness of $f$ and dominated convergence.\\ 
\end{proof}

\begin{remark}\label{0046} Let us emphasize that Theorem $\ref{3457}$ highlights a great difference between self-similarity 
and multi-self-similarity. Indeed, it is no true that all $d$-dimensional self-similar Markov processes with infinite lifetime satisfy 
the Feller property on $E$. As can been seen in the previous proof, self-similarity on all axis allows us to consider the limit of 
$P_tf(x_n)$ for any sequence $(x_n)$ converging to $x$ in $E$, whereas this convergence would only hold for sequences 
of the type $x_n=c_nx$, where $c_n>0\rightarrow1$ for self-similar Markov processes.\\ 
\end{remark}

\noindent We will say that a mssMp $\{X,\p_x\}$ is symmetric if it satisfies the two following conditions:
\begin{itemize}
\item[$(i)$] $E=s\circ E$ for all $s\in\{-1,1\}^d$ such that $(s\circ E)\cap E\neq\emptyset$.
\item[$(ii)$] $\p_x(X_t\in A)=\p_{s\circ x}(X_t\in s\circ A)$, for all $t\ge0$, $A\in\mathcal{E}$ and $s$ satisfying $(i)$.
\end{itemize}
Note that if $E$ satisfies condition $(i)$, then conditions (\ref{8499}) and $(ii)$ are equivalent to
\begin{equation}\label{8099}
\{(X_{|c|^\alpha t})_{t\ge0},\p_{c\circ x}\}=\{(c\circ X_t)_{t\ge0},\p_{x}\}\,,
\end{equation}
where $|c|^\alpha:=|c_1|^{\alpha_1}\dots |c_d|^{\alpha_d}$, for all $x\in E$ and $c\in(\mathbb{R}\setminus\{0\})^d$ 
such that $c\circ x\in E$. Note also that if $E$ consists in a single orthant, 
that is $E=Q_s$, for some $s\in\{-1,1\}^d$, then $\{X,\p_x\}$ is symmetric according to this definition. Moreover, we easily 
construct symmetric mssMp's with $E$ as the union of at least two orthants from the examples given in 
Subsection \ref{6322}.\\

We will see in the next proposition that when $\{X,\p_x\}$ is symmetric, the lifetime is either a.s. finite or a.s. infinite, 
independently of the starting state. Moreover, either the process hits 0 continuously, a.s.~or by a jump, a.s. In the next proposition, 
we will use the notation, 
\[\lim_{t\uparrow\zeta} X_t=X_{\zeta-}\]
when this limit exists. Note that when $\zeta$ is finite, the existence of $X_{\zeta-}$ is guaranteed by the fact that $\{X,\p_x\}$ is a 
Hunt process. Moreover, the fact that $X_{\zeta-}=0$ is to be understood in the topology of $E_0$. It means that, 
\[\lim_{t\uparrow\zeta}\min(|X_t^{(i)}|,|X_t^{(i)}|^{-1},\,i=1,\dots,d)=0,\;\;\;\mbox{a.s.},\]
see (\ref{3697}).

\begin{proposition}\label{3673} Assume that $\{X,\p_x\}$ is a symmetric mssMp. Then, 
\begin{itemize}
\item[$(i)$] either $\p_x(\zeta=\infty)=1$, for all $x\in E$, or $\p_x(\zeta<\infty)=1$, for all $x\in E$.
\item[$(ii)$] Assume that $\p_x(\zeta<\infty)=1$, for all $x\in E$. Then either $\p_x(X_{\zeta-}=0)=1$,
for all $x\in E$ or $\p_x(X_{\zeta-}\neq0)=1$, for all $x\in E$.
\end{itemize}
\end{proposition}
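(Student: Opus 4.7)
\emph{Strategy.} Both parts follow a common template: use the symmetric scaling relation~(\ref{8099}) together with condition~$(ii)$ of symmetry to show that the probability in question is constant on $E$, then apply the (strong) Markov property to obtain a $0$--$1$ dichotomy. For the constancy, if $x,y$ lie in the same orthant $Q_s$ then $y=c\circ x$ with $c\in(0,\infty)^d$ and~(\ref{8499}) immediately gives equality of the two probabilities. If $x\in Q_s$ and $y\in Q_{s'}$ lie in distinct orthants, set $\sigma:=s\cdot s'\in\{-1,1\}^d$, so that $\sigma\circ x\in Q_{s'}$ and $(\sigma\circ E)\cap E\supset Q_{s'}\ne\emptyset$; by condition~$(i)$ of symmetry $\sigma\circ E=E$, and condition~$(ii)$ transports the probability from $x$ to $\sigma\circ x$, after which a further positive scaling via~(\ref{8499}) brings $\sigma\circ x$ to $y$ inside $Q_{s'}$.

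\emph{Part (i).} Write $p:=\p_x(\zeta=\infty)$, constant on $E$ by the above. The Markov property at a fixed time $t>0$, together with the identity $\{\zeta=\infty\}=\{\zeta>t\}\cap\{X_{t+s}\ne 0\text{ for all }s>0\}$, gives
\[
p=\e_x[\p_{X_t}(\zeta=\infty)\,;\,\zeta>t]=p\cdot\p_x(\zeta>t).
\]
If $p>0$, then $\p_x(\zeta>t)=1$ for every $t>0$, whence $\zeta=\infty$ almost surely and $p=1$.

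\emph{Setup for part (ii).} Write $q:=\p_x(X_{\zeta-}=0)$, again constant on $E$ since for fixed $c\in(\mathbb{R}\setminus\{0\})^d$ the convergences $X_t\to 0$ and $c\circ X_t\to 0$ in $E_0$ are equivalent. Let $L(y):=\min_i(|y_i|,|y_i|^{-1})$, so by~(\ref{3697}) a point of $E_0$ lies close to $0$ exactly when $L$ is small. Introduce the stopping times
\[
T_n:=\inf\{t\ge 0: L(X_t)\le 1/n\}.
\]
On $\{T_n\ge\zeta\}$ the process remains throughout $[0,\zeta)$ in the compact set $\{y\in E:L(y)\ge 1/n\}$, so its left-limit $X_{\zeta-}$ lies in that compact set and in particular $\{T_n\ge\zeta\}\subset\{X_{\zeta-}\ne 0\}$. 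The strong Markov property at $T_n$ therefore yields
\[
q=\e_x[\p_{X_{T_n}}(X_{\zeta-}=0)\,;\,T_n<\zeta]=q\cdot\p_x(T_n<\zeta),
\]
so if $q>0$ we get $\p_x(T_n<\zeta)=1$ for every $n$; since $L(X_{T_n})\le 1/n$, this forces $X_{T_n}\to 0$ in $E_0$ almost surely.

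\emph{The main obstacle.} It remains to show that $q>0$ forces $q=1$, and this is where the work lies. The key step is to prove that for every compact $K\subset E$ the function
\[
f(y):=\p_y(X_{\zeta-}\in K)
\]
tends to $0$ as $y\to 0$ in $E_0$. Given $y\in Q_s$, write $y=c\circ x_0^{(s)}$ for a fixed reference $x_0^{(s)}\in Q_s$ and $c\in(0,\infty)^d$, and use~(\ref{8499}) to rewrite $f(y)=\p_{x_0^{(s)}}(X_{\zeta-}\in c^{-1}\circ K)$. By~(\ref{3697}), $y\to 0$ in $E_0$ forces $\min_i(|c_i|,|c_i|^{-1})\to 0$; passing to a subsequence with a fixed index $i$, either $|c_i|\to 0$, so $c^{-1}\circ K\subset\{z:|z_i|\ge|c_i|^{-1}\min_{k\in K}|k_i|\}$ escapes to infinity, or $|c_i|\to\infty$, so $c^{-1}\circ K\subset\{z:|z_i|\le|c_i|^{-1}\max_{k\in K}|k_i|\}$ collapses onto the hyperplane $\{z_i=0\}$. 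Since $\{X_{\zeta-}\in K\}\subset\{X_{\zeta-}\ne 0\}$ and each coordinate $X_{\zeta-}^{(i)}$ is almost surely finite and nonzero on this last event, both cases yield $f(y)\to 0$. Feeding this into the strong Markov identity $\p_x(X_{\zeta-}\in K)=\e_x[f(X_{T_n})]$ and applying bounded convergence gives $\p_x(X_{\zeta-}\in K)=0$; taking $K$ to exhaust $E$ then yields $\p_x(X_{\zeta-}\ne 0)=0$, i.e.~$q=1$.
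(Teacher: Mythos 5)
Your proof is correct and follows the same overall route as the paper: both parts rest on the observation that the probability in question is constant on $E$ (via the symmetric scaling relation (\ref{8099})), followed by a zero--one argument using the (strong) Markov property; and in part $(ii)$ your stopping times $T_n$ are essentially the paper's exit times $T=\inf\{t:X_t\in K^c\}$ specialised to the compact exhaustion $K_n=\{y\in E:\min_i(|y_i|,|y_i|^{-1})\ge 1/n\}$. The one place where you genuinely diverge is the closing implication of $(ii)$. Having obtained $q=q\,\p_x(T_n<\zeta)$, the paper simply asserts that since $\p_x(T<\zeta)=1$ for every compact $K$, the process ``does not reach $0$ by a jump''; making this rigorous implicitly requires quasi-left-continuity of the Hunt process along the increasing stopping times $T_n\uparrow\zeta$. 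You instead make the step self-contained by a second use of the multi-scaling property: you show that $y\mapsto\p_y(X_{\zeta-}\in K)$ vanishes as $y\to0$ in $E_0$, because after scaling the target set $c^{-1}\circ K$ either escapes to infinity in some coordinate or collapses onto a coordinate hyperplane, and neither event can be charged by $X_{\zeta-}$ on $\{X_{\zeta-}\ne0\}$; combined with $X_{T_n}\to0$ and bounded convergence this forces $q=1$. (Note that your identity $\p_x(X_{\zeta-}\in K)=\e_x[f(X_{T_n})]$ is only valid once $\p_x(T_n<\zeta)=1$ has been established, which is indeed the order in which you use it.) Your version costs an extra paragraph but trades the appeal to quasi-left-continuity for an explicit scaling argument, and it closes the proof correctly.
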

\begin{proof} Let us note that from our assumptions, for all $x,y\in E$, there is $c\in(\mathbb{R}\setminus\{0\})^d$ 
such that $y=c\circ x$ and $\{(X_{|c|^\alpha t})_{t\ge0},\p_{y}\}=\{(c\circ X_t)_{t\ge0},\p_{x}\}$, which yields the identity 
in law, 
\begin{equation}\label{5699}
\{|c|^\alpha\zeta,\p_x\}=\{\zeta,\p_y\},
\end{equation}
since $\zeta$ is the first passage time at 0 by $X$, i.e.~$\zeta=\inf\{t:X_t=0\}$. This observation allows us to extend the case of 
positive self-similar Markov processes which is treated in \cite{la} and from which our proof is inspired.\\

Let $F=\{\zeta<\infty\}$.  Then from (\ref{5699}), $\p_x(F)$ does not depend on $x\in E$. Let us set $\p_x(F)=p$. 
From the Markov property, one has for all $t>0$, 
\begin{eqnarray*}
\p_x(t<\zeta<\infty)&=&\e_x(\ind_{\{t<\zeta\}}\p_x(\mbox{$\exists
s\in(t,\infty)$, $X_s=0$}\,|\,\mathcal{F}_t))\\
&=&\e_x(\ind_{\{t<\zeta\}}\p_{X_t}(\zeta<\infty))=p\p_x(t<\zeta)\,,
\end{eqnarray*}
which leads to
\[p=\p_x(\zeta\le t)+\p_x(t<\zeta<\infty)=\p_x(\zeta\le
t)+p\p_x(t<\zeta)\,,\] so that $(1-p)\p_x(\zeta\le t)=0$. We conclude that either $p=1$, or $\p_x(\zeta\le t)=0$ for all 
$t\ge0$ and all $x\in E$, that is $\p_x(\zeta=+\infty)=1$ for all $x\in E$.\\

Let us now prove $(ii)$. Set $G=\{X_{\zeta-}=0\}$. Again, from our assumptions, for all $x,y\in E$, there is 
$c\in(\mathbb{R}\setminus\{0\})^d$ such that $y=c\circ x$ and
\[\{c\circ X_{\zeta-},\p_x\}=\{X_{\zeta-},\p_y\},\]
so that $\p_x(G)$ does not depend on $x\in E$. Set $q=\p_x(G)$ and let $K$ be any compact subset of $E$. Set 
$T=\inf\{t:X_t\in K^c\}$, where $K^c$ denotes the complementary set of $K$ in $E$. Since $G\subset\{T<\zeta\}$ and 
$G\circ\theta_{T}=G$, it follows from the strong Markov property that for all $x\in E$,
\begin{eqnarray*}
q=\p_x(G)&=&\p_x(G,T<\zeta)=\e_x\left(\ind_{\{T<\zeta\}}\p_x(G\circ\theta_{T}\,|\,\mathcal{F}_{T})\right)\\
&=&\p_x\left(\ind_{\{T<\zeta\}}\p_{X_{T}}(G)\right)=q\p_x(T<\zeta)\,.
\end{eqnarray*}
If $q\neq0$, then $\p_x(T<\zeta)=1$, for all $x\in E$.  Since this is true for all compact subsets of $E$, it follows that $\{X,\p_x\}$ 
doest not reach 0 by a jump and hence $q=1$.
\end{proof}
\noindent The Lamperti-type representation established  in Subsection \ref{2654} will allow us to give many other examples
of mssMp's with infinite lifetime, see Theorem \ref{2774}.\\  

Note that in order to have the Feller property on $E_0$, the process $\{X,\p_x\}$ should also satisfy
\begin{equation}\label{6582}
\lim_{x\rightarrow0}\e_x(f(X_t))=f(0)\,,
\end{equation}
for all $t\ge0$ and $f\in\mathcal{C}_b(E_0)$, where again this limit is to be understood in the topology of $E_0$, 
see (\ref{3697}). If $x$ tends to 0 (in $E_0$) in such a way that $|x_i|>a$ for some $a>0$ and all $i=1,\dots,d$, then (\ref{6582}) 
holds. Indeed, from the multiscaling property,
\[\e_{\mbox{\tiny sgn}(x)}(f(|x|\circ X_{t/|x|^{\alpha}})).\]
In this case, $t/|x|^{\alpha}$ tends to 0 as $x$ tends to 0 in $E_0$, so that from the right continuity of $X$ at 0,  
$\lim_{x\rightarrow0}X_{t/|x|^{\alpha}}=\mbox{ sgn}(x)$, $\p_{\mbox{\tiny sgn}(x)}$-a.s.~and hence 
$\lim_{x\rightarrow0}|x|\circ X_{t/|x|^{\alpha}}=0$, $\p_{\mbox{\tiny sgn}(x)}$-a.s. Then (\ref{6582}) follows from the fact that 
$f\in\mathcal{C}_b(E_0)$ and dominated convergence. However, it seems that (\ref{6582}) may fail when $\liminf |x_i|=0$ for some 
coordinates $x_i$ of $x$ since in this case, we can have $\liminf_{x\rightarrow0}|x|^\alpha=0$ and 
$\limsup_{x\rightarrow+\infty}|x|^\alpha=+\infty$.

%%%%%%%%%%%%%%%%%%%%%%%%%%%%%%%%%%%%%%%%%%%%%%%%%%%%%%%%%%%%%%%%%%
\subsection{Multiplicative agglomeration property of mssMp's.}\label{5630}
%%%%%%%%%%%%%%%%%%%%%%%%%%%%%%%%%%%%%%%%%%%%%%%%%%%%%%%%%%%%%%%%%%

We will prove in this subsection that  symmetric mssMp's enjoy the multiplicative agglomeration property, 
namely the process obtained by multiplying some of its coordinates is still a mssMp (with lower dimension). 
This property has been highlighted for $(0,\infty)^d$-valued mssMp's in \cite{jy} as a direct consequence of the Lamperti 
representation, see Corollary 5 therein. Although Lamperti representation will be generalized to all mssMp's later on in this paper, 
we prefer to study multiplicative agglomeration property of mssMp's in a more direct way by using Dynkin's criterion.\\

For $1\le d'\le d$ and a partition $I=\{I_1,\dots,I_{d'}\}$ of $\{1,2,\dots,d\}$, we define 
\[\Pi_I(x)=\left(\Pi_{i\in I_1} x_i,\dots,\Pi_{i\in I_{d'}} x_i\right)\,,\;\;\;x\in E\,.\]
Let us also define $E^{(I)}=\Pi_I(E)$. Then clearly $E^{(I)}$ is a subspace of $\mathbb{R}^{d'}$ which has the form described 
in (\ref{2423}). We denote by 
$\mathcal{E}^{(I)}$ the corresponding Borel $\sigma$-field and by $E^{(I)}_0$ the Alexandroff compactification of $E^{(I)}$
which is defined as for $E$, see Subsection \ref{6322}. Given any $E$-valued mssMp absorbed at 0, $\{X,\p_x\}$, we define the 
$E^{(I)}$-valued process absorbed at 0, $X^{(I)}$ by 
\[X^{(I)}=\Pi_I(X)\,,\]
and we set $\mathcal{F}^{(I)}=\sigma(X^{(I)}_t\in B,t\ge0, B\in\mathcal{E}^{(I)})$ and for all $t\ge0$, 
$\mathcal{F}^{(I)}_t=\mathcal{F}_t\cap\mathcal{F}^{(I)}$. 

\begin{proposition}\label{2742} Let $\{X,\p_x\}$ be a symmetric mssMp with index $\alpha\in[0,\infty)^d$ such that for all 
$i=1,\dots,d'$ and for all $j,k\in I_i$, $\alpha_j=\alpha_k$.  We set $\alpha'_i=\alpha_j$, for $j\in I_i$.
Then the process $X^{(I)}$ defined on the space $(\Omega,\mathcal{F}^{(I)},(\mathcal{F}_t^{(I)})_{t\ge0})$ is an 
$E^{(I)}$-valued mssMp absorbed at $0$ with index $\alpha'=(\alpha'_1,\dots,\alpha'_{d'})$. Moreover, the family of probability 
measures $\p^{(I)}_y$, $y\in E_0^{(I)}$associated with $X^{(I)}$ is given by 
\begin{equation}\label{3770}
\p^{(I)}_{y}(\Gamma)=\p_{x}(\Gamma),\;\;\;\Gamma\in\mathcal{F}^{(I)},
\end{equation}
for any $x\in\Pi_I^{-1}(y)$.
\end{proposition}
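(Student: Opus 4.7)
The plan is to verify the definition of an mssMp for $X^{(I)}$ in three steps: use Dynkin's criterion to get a Markov semigroup on $E^{(I)}$ for which the law under $\p^{(I)}_y$ coincides with that under $\p_x$ for any $x\in\Pi_I^{-1}(y)$; transfer the Hunt path properties across $\Pi_I$; and translate the multi-scaling of $X$ into multi-scaling of $X^{(I)}$ with index $\alpha'$.

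For Dynkin's criterion I would show that for every $y\in E^{(I)}$ and any two $x,x'\in\Pi_I^{-1}(y)$, the laws of the full trajectory $(X^{(I)}_t)_{t\ge0}$ under $\p_x$ and under $\p_{x'}$ coincide. Both $x$ and $x'$ lie in $E$, so none of their coordinates vanish; hence one may write $x'=c\circ x$ with $c_i=x'_i/x_i\in\mathbb{R}\setminus\{0\}$. The fiber condition $\Pi_I(x)=\Pi_I(x')$ is exactly $\prod_{i\in I_k}c_i=1$ for each $k=1,\dots,d'$, and this forces both $\Pi_I(c)=(1,\dots,1)$ and $\prod_{i\in I_k}|c_i|=1$. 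Using the hypothesis $\alpha_i=\alpha'_k$ for $i\in I_k$,
\[|c|^\alpha=\prod_{k=1}^{d'}\Bigl(\prod_{i\in I_k}|c_i|\Bigr)^{\alpha'_k}=1.\]
Since $x'=c\circ x\in E$, the symmetry identity (\ref{8099}) applies and gives $\{(X_t)_{t\ge0},\p_{x'}\}\ed\{(c\circ X_t)_{t\ge0},\p_x\}$, whose image under the continuous map $\Pi_I$ reads
\[\{(X^{(I)}_t)_{t\ge0},\p_{x'}\}\ed\{(\Pi_I(c)\circ X^{(I)}_t)_{t\ge0},\p_x\}=\{(X^{(I)}_t)_{t\ge0},\p_x\}.\]
Dynkin's criterion then produces a Markov semigroup $(P^{(I)}_t)$ on $E^{(I)}$ and a family $\p^{(I)}_y$ on $(\Omega,\mathcal{F}^{(I)})$ characterized precisely by (\ref{3770}).

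Next, I would transfer the Hunt properties. Right continuity on $[0,\infty)$, existence of left limits on $(0,\infty)$, and quasi-left continuity for $X^{(I)}$ follow from the corresponding properties of $X$ together with the continuity of $\Pi_I\colon E_0\to E^{(I)}_0$ (the convention $\Pi_I(0)=0$ is consistent with the Alexandroff topologies via (\ref{3697}), since any coordinate of $x$ blowing up or going to $0$ forces the same behavior in the corresponding product). For the strong Markov property of $X^{(I)}$ with respect to $(\mathcal{F}^{(I)}_t)$, note that any $(\mathcal{F}^{(I)}_t)$-stopping time $T$ is also an $(\mathcal{F}_t)$-stopping time; applying the strong Markov property of $X$ at $T$ and then the Dynkin identity just established shows that the conditional law of $(X^{(I)}_{T+s})_{s\ge0}$ given $\mathcal{F}_T$ depends only on $X^{(I)}_T$, and therefore descends to a conditional law given $\mathcal{F}^{(I)}_T$. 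Thus $X^{(I)}$ is an $E^{(I)}$-valued Hunt process absorbed at $0$.

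Finally, for the multi-scaling property with index $\alpha'$, given $c'\in(0,\infty)^{d'}$, $y\in E^{(I)}$ and $x\in\Pi_I^{-1}(y)$, choose any $c\in(0,\infty)^d$ with $\Pi_I(c)=c'$ (e.g.\ put all the scaling on one coordinate per block). The multi-scaling of $X$ yields $\{(X_{c^\alpha t})_{t\ge0},\p_{c\circ x}\}=\{(c\circ X_t)_{t\ge0},\p_x\}$, and composing with $\Pi_I$ together with the identities $\Pi_I(c\circ X_t)=c'\circ X^{(I)}_t$ and
\[c^\alpha=\prod_{k=1}^{d'}\Bigl(\prod_{i\in I_k}c_i\Bigr)^{\alpha'_k}=(c')^{\alpha'}\]
delivers exactly (\ref{8499}) for $X^{(I)}$ with index $\alpha'$. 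The main obstacle is the Dynkin step: the fiber condition must simultaneously trivialize the Hadamard factor $\Pi_I(c)$ and the time factor $|c|^\alpha$, and it is precisely the assumption that $\alpha$ is constant on each block $I_k$ that makes the second trivialization work.
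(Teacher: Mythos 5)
Your proof is correct and follows essentially the same route as the paper's: Dynkin's criterion verified via the symmetry identity (\ref{8099}), with the block-constancy of $\alpha$ forcing $|c|^\alpha=1$ on the fibers of $\Pi_I$, and the same computation $c^\alpha=(c')^{\alpha'}$ for the scaling index. The only imprecision is your side remark that $\Pi_I$ is continuous from $E_0$ to $E^{(I)}_0$ --- this fails at the point at infinity, since within a block one coordinate may vanish while another blows up, leaving the product bounded --- but that concerns the transfer of the Hunt path properties, a point the paper's proof does not address either.
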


\begin{proof}  From Dynkin's criterion, see Theorem 10.23, p.325 in \cite{LT7}, in order to prove that $X^{(I)}$ is a Markov process 
defined on $(\Omega,\mathcal{F}^{(I)},(\mathcal{F}_t^{(I)})_{t\ge0})$, with respect to the family of probability measures $(\p_y^{(I)})$ 
given in (\ref{3770}), it suffices to prove that for all $x,x'\in E$ such that $\Pi_I(x)=\Pi_I(x')$, and for all $t\ge0$ and 
$B\in\mathcal{E}^{(I)}$,  
\begin{equation}\label{3615}
\p_x(X_t\in\Pi_I^{-1}(B))=\p_{x'}(X_t\in\Pi_I^{-1}(B)).
\end{equation}
Let $t\ge0$, $B\in\mathcal{E}^{(I)}$ and $x,x'\in E$ be such that $\Pi_I(x)=\Pi_I(x')$ and let us set
\[a_i=\frac{x_i}{x'_i}.\]
Since all coordinates of $\Pi_I(a)$, with $a=(a_1,\dots,a_d)$, are equal to~1 and from our assumption on $\alpha$, 
we have $|a|^{-\alpha}=1$. Hence, from (\ref{8099}),
\begin{eqnarray*}
\p_{x}(X_t\in\Pi_I^{-1}(B))&=&\p_{a\circ x'}(X_t\in\Pi_I^{-1}(B))\nonumber\\
&=&\p_{x'}(a\circ X_{|a|^{-\alpha}t}\in\Pi_I^{-1}(B))\nonumber\\
&=&\p_{x'}(X_t\in\Pi_I^{-1}(B)), 
\end{eqnarray*}
which is Dynkin's criterion (\ref{3615}).\\

It remains to check that the process $\{X^{(I)},\p_x^{(I)}\}$ satisfies the multi-self-similarity property of index $\alpha'$ defined
in the statement. This follows directly from the definition of $\{X^{(I)},\p_x^{(I)}\}$ and the multi-self-similarity property of $\{X,\p_x\}$. 
Indeed, recall that $d'$ is the dimension of $E^{(I)}$ and let $c'\in(0,\infty)^{d'}$, 
$y\in E^{(I)}$ and $x\in E$, $c\in(0,\infty)^d$ such that for all $j\in I_i$, $c_j=(c'_i)^{\mbox{\footnotesize card}(I_i)^{-1}}$ and 
$x\in\Pi_I^{-1}(y)$. Then,
\begin{eqnarray*}\{c'\circ X^{(I)},\p_{y}^{(I)}\}&=&\{\Pi_I(c\circ X),\p_x\}\\
&=&\{\Pi_I(X_{c^{\alpha}\cdot}),\p_{c\circ x}\}\\
&=&\{X^{(I)}_{(c')^{\alpha'}\cdot},\p_{c'\circ y}^{(I)}\},
\end{eqnarray*} 
which achieves the proof of the proposition. 
\end{proof}

Note that we recover Jacobsen and Yor's result from Proposition \ref{2742} since the process is always symmetric when 
$E=(0,\infty)^d$. Let us also mention that symmetry is not a necessary condition for the process $\{X^{(I)},\p_x^{(I)}\}$ to be a 
mssMp. Examples of non symmetric mssMp's which satisfy the multiplicative agglomeration property can be obtained from the 
Lamperti type representation presented in the next sections. We also emphasize the importance for the coordinates of the index 
$\alpha$ to be constant on each element of the partition $I$. The above proof shows that it necessary for the process $X^{(I)}$ 
to be Markovian. This fact is easier to see from the Lamperti type representation, see Theorem \ref{2461}.    

%%%%%%%%%%%%%%%%%%%%%%%%%%%%%%%%%%%%%%%%%%%%%%%%%%%%%%%%%%%%%%%%%%
\section{Time changes in mssMp's.}\label{6200}
%%%%%%%%%%%%%%%%%%%%%%%%%%%%%%%%%%%%%%%%%%%%%%%%%%%%%%%%%%%%%%%%%%

%%%%%%%%%%%%%%%%%%%%%%%%%%%%%%%%%%%%%%%%%%%%%%%%%%%%%%%%%%%%%%%%%%
\subsection{Markov Additive Processes.}\label{2004}
%%%%%%%%%%%%%%%%%%%%%%%%%%%%%%%%%%%%%%%%%%%%%%%%%%%%%%%%%%%%%%%%%%

We will now consider Markov processes with values in a state space of the form $S\times\mathbb{R}^d$, where $S$ is some 
topological set such that $S\times\mathbb{R}^d$ is locally compact with a countable base. As usual we define the Alexandroff 
compactification of $S\times\mathbb{R}^d$ by adding a point at infinity which we denote by $\delta$.

\begin{definition}\label{1594}
A Markov additive process (MAP) $\{(J,\xi),P_{y,z}\}$ is an $S\times\mathbb{R}^d$-valued Hunt process absorbed at some extra 
state $\delta$, such that for any $y\in S$, $z\in\mathbb{R}^d$, $s,t\ge0$, and for any positive measurable function $f$, defined on 
$S\times\mathbb{R}^d$, 
\begin{equation}\label{3473}
E_{y,z}(f(J_{t+s},\xi_{t+s}-\xi_t),t+s<\zeta_*\,|\,\mathcal{G}_t)=E_{J_t,0}(f(J_{s},\xi_{s}),s<\zeta_*)\ind_{\{t<\zeta_*\}}\,,
\end{equation}
where $\zeta_*=\inf\{t:(J_t,\xi_t)=\delta\}$ is the lifetime of $\{(J,\xi),P_{y,z}\}$ and $(\mathcal{G}_t)_{t\ge0}$ is some filtration 
to which $(J,\xi)$ is adapted and completed by the measures $P_{y,z}$, $(y,z)\in S\times\mathbb{R}^d$.
\end{definition}

In what follows, we will always consider the case where $S$ is a subset of $\{-1,1\}^d$. Then the space $S\times\mathbb{R}^d$ 
is always locally compact with a countable base. Moreover, while the structure of general MAP's can turn out to be quite complicated 
(see \cite{es} and \cite{ci} where these processes were first introduced) the case where $S$ is a finite set is rather intuitive and can 
be plainly described. Since in this case, the process $(J_t)_{t\ge0}$ is nothing but a possibly absorbed continuous time Markov chain, 
it is readily seen from (\ref{3473}) that in between two successive jump times of $J$, the process $\xi$ behaves like a L\'evy process. 
Let us state this result more formally.\\

It is straightforward from (\ref{3473}) that the law of $\{J,P_{y,z}\}$ does not depend on $z$. Moreover, since $S$ is finite, 
$\{J,P_{y,z}\}$ is an $S$-valued continuous time Markov chain with lifetime $\zeta_*$, which may be sent to some extra state 
$\delta'$ for $t\ge\zeta_*$. Let us set $n:=2^d=\mbox{card}(S)$, then the law of the MAP $\{(J,\xi),P_{y,z}\}$ is characterized by 
the intensity matrix $Q=(q_{ij})_{i,j\in S}$ of $J$, the laws of $n$ possibly killed $\mathbb{R}^d$-valued L\'evy processes 
$\tilde{\xi}^{(1)},\dots,\tilde{\xi}^{(n)}$, and the $\mathbb{R}^d$-valued random variables $\Delta_{ij}$, such that $\Delta_{ii}=0$ and 
where, for $i\neq j$, $\Delta_{ij}$ represents the size of the jump of $\xi$ when $J$ jumps from $i$ to $j$. More specifically, for 
$u\in\mathbb{C}^d$, define for $i,j\in S$ and $k=1,\dots,n$ when these expectations exist,
\[\e(e^{\langle u,\tilde{\xi}^{(k)}_1\rangle})=e^{\psi_k(u)}\;\;\;\;\mbox{and}\;\;\;\;G_{i,j}(u)=\e(\exp(\langle u,\Delta_{i,j}\rangle))\,.\]
Then a trivial extension of Proposition 2.2 in Section XI.2 of \cite{LT8} shows that the law of $\{(J,\xi),P_{y,z}\}$ is given by
\begin{equation}\label{7293}
E_{i,0}(e^{\langle u,\xi_t\rangle},J_t=j)=(e^{A(u)t})_{i,j}\,,\;\;\;i,j\in S\,,\;\;\;u\in\mathbb{C}^d\,,
\end{equation}
where $A(u)$ is the matrix,
\[A(u)=\mbox{diag}(\psi_1(u),\dots,\psi_{n}(u))+(q_{ij}G_{i,j}(u))_{i,j\in S}\,.\]
The matrix-valued mapping $u\mapsto A(u)$ will be called the characteristic exponent of the MAP $\{(J,\xi),P_{y,z}\}$. We also refer 
to Sections A.1 and A.2 of \cite{LT1} for more details. Throughout the remainder of this paper, the coordinates of a MAP with values 
in $S\times\mathbb{R}^d$ will be denoted by $(J,\xi)=(J^{(i)},\xi^{(i)})_{1\le i\le d}$.\\

Let $\{(J,\xi),P_{y,z}\}$ be an $S\times\mathbb{R}^d$-valued MAP with infinite lifetime, that is $P_{y,z}(\zeta_*=\infty)=1$, for all 
$y,z\in S\times\mathbb{R}^d$. Then it is readily seen that for each $k=1,\dots,d$, the process $(J,\xi^{(k)})$ is itself an 
$S\times\mathbb{R}$-valued MAP with infinite lifetime. Let $P^{(k)}_{y,z}$, $y,z\in S\times\mathbb{R}$ be the corresponding family 
of probability measures, i.e.~$\{(J,\xi^{(k)}),P^{(k)}_{y,z}\}$ is an $S\times\mathbb{R}$-valued MAP with infinite lifetime. Denote by 
$A_k$ the corresponding characteristic exponent, that is from (\ref{7293}),
\begin{equation}\label{5644}
A_k(u)=A(u\cdot e_k)\,,\;\;\;u\in\mathbb{C}\,,
\end{equation}
where $e_k$ is the $k$-th unit vector of $\mathbb{R}^d$. 
Fix $k=1,\dots,d$, assume that the Markov chain $(J_t)$ is irreducible and that there exists $u\in\mathbb{R}\setminus\{0\}$ such 
that $A_k(u)$, $i=1,\dots,d$ is well defined (i.e.~all entries of $A_k(u)$ exist and are finite). Then from Perron-Frobenius theory, 
the matrix $A_k(u)$ has a real simple eigenvalue $\chi_k(u)$ which is larger than the real part of all its other eigenvalues. Let 
$I=[u,0]$ if $u<0$ and $I=[0,u]$ if $u>0$. Then the function $u\mapsto\chi_k(u)$ is convex on $I$. Let us denote by $\chi_k'(0)$ 
the left (respectively, the right) derivative at 0 of $\chi_k$ if $u<0$ (respectively, if $u>0$). The following result can be found in 
Section XI.2 of \cite{LT8}.

\begin{proposition}\label{2690}
Assume that $J$ is irreducible. Let $k=1,\dots,d$ and assume that there exists $u\in\mathbb{R}\setminus\{0\}$ is such that 
$A_k(u)$ is well defined. Then the asymptotic behavior of $\xi^{(k)}$ does not depend on the initial state of 
$\{(J,\xi^{(k)}),P^{(k)}_{y,z}\}$ and is given by  
\[\lim_{t\rightarrow\infty}\frac{\xi^{(k)}_t}t=\chi'_k(0),\;\;\;\mbox{$P^{(k)}_{y,z}$-a.s. for all $y,z\in S\times\mathbb{R}$}.\]
In that case,  for all $y,z\in S\times\mathbb{R}$, $\lim_{t\rightarrow\infty}\xi^{(k)}_t=\infty$, $P^{(k)}_{y,z}$-a.s.~or 
$\lim_{t\rightarrow\infty}\xi^{(k)}_t=-\infty$, $P^{(k)}_{y,z}$-a.s.~or 
$\limsup_{t\rightarrow\infty}\xi^{(k)}_t=-\liminf_{t\rightarrow\infty}\xi^{(k)}_t=\infty$, $P^{(k)}_{y,z}$-a.s., according as 
$\chi'_k(0)>0$, $\chi'_k(0)<0$ or $\chi'_k(0)=0$, respectively. 
\end{proposition}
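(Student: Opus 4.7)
The strategy is to reduce the continuous-time MAP to a discrete-time i.i.d.\ random walk via the renewal structure of the modulating chain. Since $S$ is finite and $J$ is irreducible, $J$ is positive recurrent. Fix a reference state $i_0\in S$ and let $0=T_0<T_1<T_2<\dots$ be the successive return times of $J$ to $i_0$. By the Markov additive property (\ref{3473}) combined with the strong Markov property applied at each $T_n$, the pairs $(T_{n+1}-T_n,\,\xi^{(k)}_{T_{n+1}}-\xi^{(k)}_{T_n})$ are i.i.d.\ under $P^{(k)}_{i_0,0}$, and for $n\geq 1$ they are i.i.d.\ independently of the starting state (which accounts for the asymptotic being independent of the initial condition). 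Positive recurrence gives $E[T_1]<\infty$, and the hypothesis that $A_k(u)$ is well defined at some $u\neq 0$ guarantees exponential moments for the L\'evy components $\tilde{\xi}^{(j)}$ and for the jump variables $\Delta_{ij}$; via a trivial conditioning on the finite number of jumps during a cycle, this yields $E[\,|\xi^{(k)}_{T_1}|\,]<\infty$, and more.

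Applying the classical SLLN to the i.i.d.\ sequences gives $T_n/n\to E[T_1]$ and $\xi^{(k)}_{T_n}/n\to E[\xi^{(k)}_{T_1}]$ almost surely, hence along the subsequence $(T_n)$,
\[\frac{\xi^{(k)}_{T_n}}{T_n}\longrightarrow m^*:=\frac{E[\xi^{(k)}_{T_1}]}{E[T_1]}\,,\quad\text{a.s.}\]
To upgrade this to $\xi^{(k)}_t/t\to m^*$, one controls the within-cycle fluctuations $M_n:=\sup_{T_n\le s\le T_{n+1}}|\xi^{(k)}_s-\xi^{(k)}_{T_n}|$. These $M_n$ are i.i.d.\ with $E[e^{\lambda M_1}]<\infty$ for some $\lambda>0$ thanks to the exponential-moment hypothesis, and a standard Borel--Cantelli argument then forces $M_n=o(n)$ a.s., which is more than enough since $T_n\sim n\,E[T_1]$.

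The main obstacle is identifying $m^*$ with $\chi'_k(0)$. Differentiating the identity (\ref{7293}) at $u=0$ and using that $A_k(0)=Q$ has simple dominant eigenvalue $0$ with right eigenvector $(1,\dots,1)^\top$ and left eigenvector equal to the stationary distribution $\pi$ of $J$, a routine perturbation computation of the Perron eigenvalue gives $\chi'_k(0)=\sum_{i\in S}\pi_i\, E_{i,0}[\xi^{(k)}_1]$; on the other hand, applying the ergodic theorem to $J$ (i.e.\ the renewal identity $\pi_i=E_{i_0}[\text{time spent at $i$ during $[0,T_1]$}]/E[T_1]$) identifies this same quantity with $E[\xi^{(k)}_{T_1}]/E[T_1]=m^*$. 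Once this identification is in place, the trichotomy is immediate: if $\chi'_k(0)\neq 0$ the a.s.\ linear speed forces $\xi^{(k)}_t\to\pm\infty$ with the prescribed sign, while if $\chi'_k(0)=0$, the embedded walk $(\xi^{(k)}_{T_n})$ is a centered i.i.d.\ random walk on $\mathbb{R}$ with finite second moment, so the Chung--Fuchs theorem yields $\limsup\xi^{(k)}_{T_n}=-\liminf\xi^{(k)}_{T_n}=\infty$ a.s.\ (unless the walk is degenerate, in which case $\xi^{(k)}\equiv 0$ and the statement is trivial); the excursion bound $M_n=o(n)$ transfers this oscillation from the subsequence to all $t$.
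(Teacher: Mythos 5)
The paper does not actually prove this proposition: it is quoted from Section XI.2 of Asmussen's book (\cite{LT8}), and the argument given there is essentially the one you propose --- regeneration at the return times of $J$ to a fixed state, the SLLN for the i.i.d.\ cycle increments, identification of the drift $E[\xi^{(k)}_{T_1}]/E[T_1]$ with $\pi A_k'(0)\mathbf{1}=\chi_k'(0)$ by first-order perturbation of the Perron--Frobenius eigenvalue at $u=0$, and the random-walk trichotomy for the embedded walk $(\xi^{(k)}_{T_n})_n$. So your architecture is the intended one, and the reduction to i.i.d.\ cycles, the independence of the limit from the initial state via delayed regeneration, and the eigenvalue computation are all correct.

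There is, however, one concrete overreach. The hypothesis is only that $A_k(u)$ is finite at a \emph{single} $u\neq 0$, say $u>0$; this controls the upward tails of the L\'evy components and of the $\Delta_{ij}$ exponentially, but gives no control whatsoever on the downward tails, which need not even be integrable. Hence your claims that $E[\,|\xi^{(k)}_{T_1}|\,]<\infty$ and, a fortiori, that the two-sided cycle supremum $M_1$ satisfies $E[e^{\lambda M_1}]<\infty$, do not follow from the hypothesis and are false in general; likewise the ``finite second moment'' invoked for the oscillating case is not available (nor needed: a nondegenerate mean-zero walk with finite first moment already oscillates by the standard trichotomy for random walks). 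The repair is to keep everything one-sided: the SLLN for i.i.d.\ variables with integrable positive part gives $\xi^{(k)}_{T_n}/T_n\to E[\xi^{(k)}_{T_1}]/E[T_1]\in[-\infty,\infty)$, and only the one-sided cycle fluctuation $\sup_{T_n\le s\le T_{n+1}}(\xi^{(k)}_s-\xi^{(k)}_{T_n})^{+}$ --- which does have an exponential moment under the hypothesis --- is needed to transfer the limit from the subsequence $(T_n)$ to all $t$. This is precisely why the statement defines $\chi_k'(0)$ as a one-sided derivative of the convex function $\chi_k$ on $[0,u]$ (or $[u,0]$): that derivative may equal $-\infty$ (or $+\infty$), a case your argument as written silently excludes but which the one-sided version covers.
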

\noindent Note that more generally, if $M:\mathbb{R}^d\rightarrow\mathbb{R}^{d'}$ is a linear mapping, where $d'$ is any integer,
then the process $\{(J,M(\xi)),P_{y,z}\}$ is an $S\times\mathbb{R}^{d'}$-valued MAP. This property will be used in the next 
subsections with $M(x)=\langle\alpha,x\rangle$, for some $\alpha\in[0,\infty)^d$.\\ 

Examples of MAP's can easily be obtained by coupling any continuous time Markov chain on $S$ together with any $d$-dimensional 
L\'evy process and by killing the couple at some independent exponential time. More specifically, the transition probabilities of the 
process $\{(J,\xi),P_{y,z}\}$ have the following particular form:
\begin{equation}\label{3492}
\left\{\begin{array}{l}
P_{y,z}(J_t\in dy_1,\xi_t\in dz_1)=e^{-\lambda t}P_y^{J'}(J'_t\in dy_1)P_z^{\xi'}(\xi_t'\in dz_1)\,,\\
P_{y,z}((J_t,\xi_t)=\delta)=1-e^{-\lambda t}\,,
\end{array}\right.
\end{equation}
for all $t\ge0$, $(y,z),(y_1,z_1)\in S\times \mathbb{R}^d$, where $\lambda>0$ is some constant, $\{\xi',P_z^{\xi'}\}$ is any non killed 
$d$-dimensional L\'evy process and $\{J',P_y^{J'}\}$ is any continuous time Markov chain on $S$ with infinite lifetime. 
Then it is easy to check that this process $\{(J,\xi),P_{y,z}\}$ is an $S\times \mathbb{R}^d$-valued MAP which is absorbed at $\delta$,
in the sense of Definition \ref{1594}. The law of such a MAP is characterized by the fact that $\psi_k=\psi_l$ for all $k,l=1,\dots,n$ and 
$G_{i,j}(u)=1$ for all $i,j\in S$ in (\ref{7293}). 
Assume that the above process has infinite lifetime, that is $\lambda=0$. Then the condition of Proposition \ref{2690} is satisfied if and 
only if there exists $u\in\mathbb{R}$ such that $\psi_1(u)$ exists and is finite. In this example one may check that $\chi'(0)>0$, 
$<0$ or $=0$ according as $\psi_1'(0)>0$, $<0$ or $=0$. Of course this result is intuitively clear since $J$ and $\xi$ are independent. 

%%%%%%%%%%%%%%%%%%%%%%%%%%%%%%%%%%%%%%%%%%%%%%%%%%%%%%%%%%%%%%%%%%
\subsection{The Lamperti representation for mssMp's.}\label{2654}
%%%%%%%%%%%%%%%%%%%%%%%%%%%%%%%%%%%%%%%%%%%%%%%%%%%%%%%%%%%%%%%%%%

Recall that $S$ and $E$ are any sets such that 
\[S\subset\{-1,1\}^d\;\;\;\mbox{and}\;\;\;E=\cup_{s\in S}Q_s\,,\]
where $Q_s$ is defined in (\ref{3089}). Then let us define the one-to-one transformation $\varphi:S\times\mathbb{R}^d \rightarrow E$ 
and its inverse as follows:
\begin{eqnarray*}
\varphi(y,z)&=&(y_ie^{z_i})_{1\le i\le d}\,,\;\;\;(y,z)\in S\times\mathbb{R}^d\,,\\
\varphi^{-1}(x)&=&(\mbox{sgn}(x_i),\log(|x_i|))_{1\le i\le d}\,,\;\;\;x\in E\,.
\end{eqnarray*}
In the remainder of this work, for $\alpha\in[0,\infty)^d$, we will denote,  
\[\bar{\xi}=\langle\alpha,\xi\rangle\,,\]
where $\xi$ is the second coordinate of the $S\times\mathbb{R}^d$-valued MAP $\{(J,\xi),P_{y,z}\}$.\\

The next theorem extends Theorem 1 of \cite{jy}. It provides a one to one relationship between the set of $\mathbb{R}^d$-valued 
mssMp's and this of MAP's with values in $\{-1,1\}^d\times\mathbb{R}^d$.\\ 

\begin{theorem}\label{2461}
Let  $\alpha\in[0,\infty)^d$ and $\{(J,\xi),P_{y,z}\}$ be a MAP in $S\times\mathbb{R}^d$, with lifetime $\zeta_*$ and absorbing state 
$\delta$. Define the process $X$ by 
\[X_t=\left\{\begin{array}{lll}
\varphi(J_{\tau_t},\xi_{\tau_t})\,,&\mbox{if}&\mbox{$t<\int_0^{\zeta_*}e^{\bar{\xi}_s}\,ds$}\,,\\
0\,,&\mbox{if}&\mbox{$t\ge\int_0^{\zeta_*}\exp(\bar{\xi}_s)\,ds$}\,,
\end{array}\right.\]
where $\tau_t$ is the time change $\tau_t=\inf\{s:\int_0^se^{\bar{\xi}_u}\,du>t\}$, for $t<\int_0^{\zeta_*} e^{\bar{\xi}_s}\,ds$. Define 
the probability measures $\p_x:=P_{\varphi^{-1}(x)}$, for $x\in E$ and $\p_0:=P_\delta$. Then the process $\{X,\p_x\}$ is an 
$E$-valued mssMp, with index $\alpha$ and lifetime $\int_0^{\zeta_*}e^{\bar{\xi}_s}\,ds$.

Conversely, let $\{X,\p_x\}$ be an $E$-valued mssMp, with index $\alpha\in[0,\infty)^d$ and denote by $\zeta$ its lifetime. Define the 
process $(J,\xi)$ by 
\[(J_t,\xi_t)=\left\{\begin{array}{lll}
\varphi^{-1}(X_{A_t})\,,\;\;\mbox{if}&t<\int_0^{\zeta}\frac{ds}{|X_s^{(1)}|^{\alpha_1}\dots |X_s^{(d)}|^{\alpha_d}}\,,\\
\delta\,,\;\;\mbox{if}&t\ge \int_0^{\zeta}\frac{ds}{|X_s^{(1)}|^{\alpha_1}\dots |X_s^{(d)}|^{\alpha_d}}\,,
\end{array}\right.\]
where $\delta$ is some extra state, and $A_t$ is the time change 
$A_t=\inf\{s:\int_0^s\frac{du}{|X_u^{(1)}|^{\alpha_1}\dots |X_u^{(d)}|^{\alpha_d}}>t\}$, 
for $t<\int_0^{\zeta}\frac{ds}{|X_s^{(1)}|^{\alpha_1}\dots |X_s^{(d)}|^{\alpha_d}}$.  
Define the probability measures, $P_{y,z}:=\p_{\varphi(y,z)}$, for $(y,z)\in S\times\mathbb{R}^d$ and 
$P_\delta:=\p_0$. Then the process $\{(J,\xi),P_{y,z}\}$ is a MAP  in $S\times\mathbb{R}^d$, with lifetime 
$\int_0^{\zeta}\frac{ds}{|X_s^{(1)}|^{\alpha_1}\dots |X_s^{(d)}|^{\alpha_d}}$.
\end{theorem}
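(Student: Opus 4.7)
The plan is to prove the two directions separately, using the fact that $\varphi\colon (y,z)\mapsto (y_ie^{z_i})_{i=1}^d$ is a homeomorphism between $S\times\mathbb{R}^d$ and $E$ which translates Hadamard multiplication $c\circ x$ on $E$ into the additive action $z\mapsto z+\log c$ on the $\xi$-component, and under which the multi-scaling time change by $c^\alpha$ corresponds to multiplication of the clock density $e^{\bar\xi_s}$ by $c^\alpha=e^{\langle\alpha,\log c\rangle}$. The routine topological input is that $\int_0^s e^{\bar\xi_u}\,du$ and $\int_0^s |X_u^{(1)}|^{-\alpha_1}\dots|X_u^{(d)}|^{-\alpha_d}\,du$ are continuous, strictly increasing perfect additive functionals on $[0,\zeta_*)$ and $[0,\zeta)$ respectively (the integrands being strictly positive and locally bounded on $S\times\mathbb{R}^d$ and $E$), so their right-continuous inverses are families of stopping times and the classical theorem that time-changing a Hunt process by such an inverse yields a Hunt process applies on both sides.

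For the direction MAP $\to$ mssMp, once the time-change theorem gives that $\{X,\p_x\}$ is an $E$-valued Hunt process absorbed at $0$ with the claimed lifetime, the multi-scaling property (\ref{8499}) is a direct computation. Given $x=\varphi(y,z)$ and $c\in(0,\infty)^d$, one has $c\circ x=\varphi(y,z+\log c)$. The MAP identity (\ref{3473}) specialized to $t=0$ yields $\{(J_s,\xi_s),P_{y,z+\log c}\}\ed\{(J_s,\log c+\xi_s),P_{y,z}\}$, so $e^{\bar\xi_s}$ under $P_{y,z+\log c}$ is $c^\alpha$ times $e^{\bar\xi_s}$ under $P_{y,z}$. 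Consequently $\tau'_t=\tau_{t/c^\alpha}$, and pushing through $\varphi$ yields $X'_t\ed c\circ X_{t/c^\alpha}$, which is (\ref{8499}).

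For the converse, $(J,\xi):=\varphi^{-1}(X_{A_\cdot})$ is Hunt by the same time-change theorem (since $\varphi^{-1}$ is a homeomorphism extending trivially to cemeteries). The heart of the proof is the MAP property (\ref{3473}). Using the strong Markov property of $X$ at the stopping time $A_t$ together with the additivity $A_{t+s}=A_t+A'_s\circ\theta_{A_t}$, the conditional law of $(J_{t+s},\xi_{t+s})$ given $\mathcal{F}_{A_t}$ is the law of $\varphi^{-1}(X_{A_s})$ started from $X_{A_t}=\varphi(J_t,\xi_t)$. Applying (\ref{8499}) with $c=(e^{\xi_t^{(i)}})_i$, so that $c\circ J_t=\varphi(J_t,\xi_t)$, gives $\{X_s,\p_{\varphi(J_t,\xi_t)}\}\ed\{c\circ X_{s/c^\alpha},\p_{J_t}\}$; since $|c\circ X|^{-\alpha}=c^{-\alpha}|X|^{-\alpha}$, the additive functional $A^{-1}$ is simply rescaled by $c^{-\alpha}$, hence $A_s$ under $\p_{\varphi(J_t,\xi_t)}$ equals in law $c^\alpha\cdot A_s$ under $\p_{J_t}$. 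Consequently $\varphi^{-1}(X_{A_s})$ under $\p_{\varphi(J_t,\xi_t)}$ equals in law $(J_s,\xi_s+\xi_t)$ under $P_{J_t,0}$, and subtracting $\xi_t$ yields (\ref{3473}).

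The main technical obstacle is precisely the last computation: it requires juggling the strong Markov property, the multi-scaling identity, and the compatibility of the inverse additive functional with scaling all at once. Each piece is however forced by the single identity $|c\circ X|^{-\alpha}=c^{-\alpha}|X|^{-\alpha}$, which is exactly why the exponent $\alpha$ of the additive functional and the index $\alpha$ of the multi-self-similarity must coincide for the representation to be bijective. Once this is set up the verification is bookkeeping, and the two constructions are inverse to each other by inspection of $\varphi$, $\varphi^{-1}$ and the mutual inversion of the clocks $\int e^{\bar\xi}\,ds$ and $\int|X|^{-\alpha}\,du$.
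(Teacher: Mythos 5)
Your proposal is correct and follows essentially the same route as the paper: the homeomorphism $\varphi$, the Hunt-process time-change theorem applied to the strictly increasing continuous additive functional, the shift-invariance of the MAP combined with the rescaling of the clock for the direct part, and the strong Markov property at $A_t$ together with the multi-scaling identity applied with $c=|X_{A_t}|$ for the converse. The paper's proof is just the detailed execution of exactly this plan (citing Theorem A.2.12 of Fukushima--Oshima--Takeda for the time change), so there is nothing substantive to add.
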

\begin{proof} Let us denote by $(\mathcal{G}_t)_{t\ge0}$ the filtration associated to the process $(J,\xi)$ and completed with respect 
to the measures $(P_{y,z})_{y,z\in S\times\mathbb{R}^d}$, see the beginning of Subsection \ref{6322}. Then define the process 
\[Y_t=\left\{\begin{array}{lll}
\varphi(J,\xi)_{t}\,,&\mbox{if}&\mbox{$t<\zeta_*$}\,,\\
0\,,&\mbox{if}&\mbox{$t\ge\zeta_*$}\,,
\end{array}\right.\]
and set $Y_t=(Y^{(1)}_t,\dots,Y^{(d)}_t)$ as usual. Recall from Subsection \ref{6322} the definition of $E_0$. Since $\varphi$ is a 
continuous one-to-one transformation from $S\times\mathbb{R}^d$ to $E$, we readily check that the process
\[(\Omega,\mathcal{F},(\mathcal{G}_t)_{t\ge0},(Y_t)_{t\ge0},(\p_x)_{x\in E_0}),\] 
where $\p_x$ is defined as in the statement, is an $E$-valued Hunt process absorbed at 0. Now let $\tau_t$ be as in the statement 
if $t<\int_0^{\zeta_*} e^{\bar{\xi}_s}\,ds$, and set $\tau_t=\infty$ and $Y_{\tau_t}=0$, if 
$t\ge\int_0^{\zeta_*} e^{\bar{\xi}_s}\,ds$. Since $e^{\bar{\xi}_s}=|Y_s^{(1)}|^{\alpha_1}|Y_s^{(2)}|^{\alpha_2}\dots|Y_s^{(d)}|^{\alpha_d}$,
 $(\tau_t)_{t\ge0}$ is the right continuous inverse of the continuous, additive functional 
$t\mapsto\int_0^{t\wedge\zeta_*}|Y_s^{(1)}|^{\alpha_1}|Y_s^{(2)}|^{\alpha_2}\dots|Y_s^{(d)}|^{\alpha_d}\,ds$ of $\{Y,\p_x\}$, which is 
strictly increasing on $(0,\zeta_*)$. It follows from Theorem A.2.12, p.406 in \cite{fom} that the time changed process
\[(\Omega,\mathcal{F},(\mathcal{G}_{\tau_t})_{t\ge0},(X_t)_{t\ge0},(\p_x)_{x\in E_0}),\]  is an $E$-valued Hunt 
process absorbed at 0. Moreover $\zeta:=\int_0^{\zeta_*} e^{\bar{\xi}_s}\,ds$ is the lifetime of $\{X,\p_x\}$.\\

Now let us show that $\{X,\p_x\}$ fulfills  the multi-scaling property. Let $c\in(0,\infty)^d$, then for 
$t<c^{-\alpha}\int_0^{\zeta_*} e^{\bar{\xi}_s}\,ds$,
\begin{equation}\label{2561}
\tau_{c^{\alpha}t}=\inf\{s:\int_0^se^{\langle\alpha,\xi^{(c)}_v\rangle}\,dv>t\}\,,
\end{equation}
where $\xi^{(c)}_t=(\xi^{(c,1)}_t,\dots,\xi^{(c,d)}_t)$ and $\xi^{(c,i)}_t=-\ln c_i+\xi_t^{(i)}$. It follows from Definition \ref{1594} of 
MAP's that
\begin{equation}\label{2562}
\{(J,\xi^{(c)}),P_{y,\ln c+z}\}=\{(J,\xi),P_{y,z}\},
\end{equation}
where $\ln c=(\ln c_1,\dots,\ln c_d)$. Let us set $\tau^{(c)}_t:=\tau_{c^{\alpha}t}$, then we derive from  (\ref{2561}) and (\ref{2562})
that
\begin{equation}\label{5555}
\{(c_iJ^{(i)}_{\tau_t^{(c)}}\exp(\xi^{(c,i)}_{\tau_t^{(c)}}))_{t\ge0},P_{y,\ln c+z}\}=
\{(c_iJ^{(i)}_{\tau_t}\exp(\xi^{(i)}_{\tau_t}))_{t\ge0},P_{y,z}\}\,.
\end{equation}
On the other hand, it is straightforward from the definitions that
\begin{equation}\label{3555}
X^{(i)}_{c^{\alpha}t}=c_iJ^{(i)}_{\tau_t^{(c)}}\exp(\xi^{(c,i)}_{\tau_t^{(c)}})\,.
\end{equation}
Then by taking $x=\varphi(y,z)$ so that $c\circ x=\varphi(y,\ln c+z)$ and $\p_x=P_{y,z}$, $\p_{c\circ x}=P_{y,\ln c+z}$, we 
derive from (\ref{5555}) and (\ref{3555}) that
\[\{(X_{c^{\alpha }t})_{t\ge0},\p_{c\circ x}\}=\{(c\circ X_t)_{t\ge0},\p_{x}\}\,.\]

Conversely, let $\{X,\p_x\}$ be an $E$-valued mssMp, with index $\alpha\in[0,\infty)^d$ and lifetime $\zeta$.  
Then by arguing exactly as in the direct part, we prove that the process $\{(J,\xi),P_{y,z}\}$ defined in the statement is an 
$S\times\mathbb{R}^d$-valued Hunt process, with lifetime 
$\zeta_*:=\int_0^{\zeta}\frac{ds}{|X_s^{(1)}|^{\alpha_1}\dots |X_s^{(d)}|^{\alpha_d}}$.\\

Now we have to check that the Hunt process $\{(J,\xi),P_{y,z}\}$ is a MAP. Let $(\mathcal{F}_t)_{t\ge0}$ be the filtration associated 
to   $X$ and completed with respect to the measures $(\p_x)_{x\in E}$. Define $A_t$ as in the statement if 
$t<\int_0^{\zeta}\frac{ds}{|X_s^{(1)}|^{\alpha_1}\dots |X_s^{(d)}|^{\alpha_d}}$, set $A_t=\infty$, if 
$t\ge\int_0^{\zeta}\frac{ds}{|X_s^{(1)}|^{\alpha_1}\dots |X_s^{(d)}|^{\alpha_d}}$ and note that for each $t$, $A_t$ is a stopping time of 
$(\mathcal{F}_t)_{t\ge0}$. Let us denote by $\theta_t$ the usual shift operator at time $t$ and note that for all $s,t\ge0$, 
\[A_{t+s}=A_t+\theta_{A_t}(A_s)\,.\]
Then let us prove that $\{(J,\xi),P_{y,z}\}$ is a MAP in the filtration $\mathcal{G}_t:=\mathcal{F}_{A_t}$. First observe that $(J,\xi)$ is 
clearly adapted to this filtration. Then from the strong Markov property of $\{X,\p_x\}$ applied at the stopping time $A_t$, we derive 
 from the definition of $\{(J,\xi),P_{y,z}\}$ that for any positive, Borel function $f$ and $x\in E$,
\begin{eqnarray*}
&&E_{\varphi^{-1}(x)}(f(J_{t+s},\xi_{t+s}-\xi_t),t+s<\zeta_*\,|\,\mathcal{G}_t)\\
&=&\e_{x}\left(f\left(\varphi^{-1}(X_{A_t+\theta_{A_t}(A_s)})-(0,\ln|X_{A_t}|)\right),A_t+\theta_{A_t}(A_s)<\zeta\,|\,\mathcal{F}_{A_t}\right)\\
&=&\e_{X_{A_t}}\left(f\left(\varphi^{-1}(X_{A_s})-(0,\ln z)\right),A_s<\zeta\right)_{z=|X_{A_{t}}|}\ind_{\{A_t<\zeta\}}\\
&=&\e_{\mbox{\footnotesize sign}(X_{A_t})}\left(f\left(\varphi^{-1}(X_{A_s})\right),A_s<\zeta\right)\ind_{\{A_t<\zeta\}}\\
&=&E_{J_t,0}(f(J_{s},\xi_{s}),s<\zeta_*)\ind_{\{t<\zeta_*\}}\,,
\end{eqnarray*}
where we have set $|x|=(|x_1|\,\dots,|x_d|)$, $\ln|x|=(\ln|x_1|\,\dots,\ln|x_d|)$ and where the third equality follows from the 
multi-self-similarity property of $\{X,\p_x\}$. We have obtained (\ref{3473}) and this ends the proof of the theorem.
\end{proof}

\noindent From this theorem, it is now easy to construct many examples of non trivial mssMp's. We can use for instance the 
MAP which is defined in (\ref{3492}) by coupling any continuous time Markov chain with an independent L\'evy process.   

%%%%%%%%%%%%%%%%%%%%%%%%%%%%%%%%%%%%%%%%%%%%%%%%%%%%%%%%%%%%%%%%%%
\subsection{Asymptotic behavior of mssMp's.}\label{5732}
%%%%%%%%%%%%%%%%%%%%%%%%%%%%%%%%%%%%%%%%%%%%%%%%%%%%%%%%%%%%%%%%%%

In this subsection we derive from Theorem \ref{2461} the behavior of a mssMp $\{X,\p_x\}$ as $t$ tends to $\zeta$. 
From this theorem and the construction of MAP's given in Subsection \ref{2004}, if the lifetime $\zeta_*$ of the underlying 
MAP $(J,\xi)$ under $P_{y,z}$ is finite with positive probability, then so is $\zeta$ and for $x=\varphi(y,z)$, the process $X$ under 
$\p_x$, jumps to 0 on the set $\zeta<\infty$, that is $X_{\zeta-}\neq0$, with positive probability. This situation has no interest for the 
problem we are studying and we will skip it. Moreover, we will always assume that $J$ is irreducible so that  if $E$
is composed of at least two orthants, then $\{X,\p_x\}$ does not pass through some of them a finite number of times. 
The reducible case can always be boiled down to the irreducible one from classical arguments. Therefore, we will assume that 
$\{X,\p_x\}$ is an $E$-valued mssMp absorbed at 0 whose underlying MAP $\{(J,\xi),P_{y,z}\}$ in the transformation given in 
Theorem \ref{2461} satisfies:

\begin{itemize}
\item[$(a)$] For all $y,z\in S\times\mathbb{R}^d$, $\zeta_*=\infty$, $P_{y,z}$-a.s.
\item[$(b)$] $J$ is irreducible.
\end{itemize}

\noindent Under assumption $(a)$ and from Theorem \ref{2461}, the lifetime of $\{X,\p_x\}$ is given by 
$\zeta=\int_0^\infty e^{\bar{\xi}_s}\,ds$. In order to determine conditions for this lifetime to be finite or infinite, we need reasonable 
assumptions on the asymptotic behavior of $\bar{\xi}$. These conditions will be ensured by Proposition \ref{2690}, so we will also 
assume: 

\begin{itemize}
\item[$(c)$] For each  $k=1,\dots,d$, there is $u\in\mathbb{R}\setminus\{0\}$ such that $A_k(u)$ is well defined. 
\end{itemize}

\noindent Let $\alpha\in[0,\infty)^d$ and note that the law of the process $(J,\bar{\xi})$ under $P_{y,z}$ only depends on $y$ and 
$\langle\alpha,z\rangle$. As already observed at the end of Subsection \ref{2004}, this process is actually an 
$S\times \mathbb{R}$-valued MAP under the family of probability measures defined by $\overline{P}_{y,t}:=P_{y,z}$, where $z$ 
is any vector such that $t=\langle\alpha,z\rangle$. The characteristic exponent of $\{(J,\bar{\xi}),\overline{P}_{y,t}\}$ is then given by 
\[\bar{A}(u)=A(u\cdot\alpha),\;\;\;u\in\mathbb{C}.\]
Since $A_k(u)=A(u\cdot e_k)$, under assumption $(c)$, there is $u\in\mathbb{R}\setminus\{0\}$ such that $\bar{A}(u)$ is well 
defined. Note that the Perron-Frobenius eigenvalue of $\bar{A}(u)$ depends on $\alpha$. Let us denote it by $\kappa_\alpha$. 
Applying Proposition \ref{2690}, we obtain  
\begin{equation}\label{7695}
\lim_{t\rightarrow\infty}\frac{\bar{\xi}_t}t=\kappa_\alpha,\;\;\;\mbox{$P_{y,z}-a.s.$, for all $y,z\in S\times \mathbb{R}^d$}.
\end{equation}       
Recall from Proposition XI.2.10 of \cite{LT8} that
$\lim_{t\rightarrow\infty}\bar{\xi}_t=-\infty$, $P_{y,z}$-a.s. for all $y,z\in S\times \mathbb{R}^d$ or 
$\lim_{t\rightarrow\infty}\bar{\xi}_t=+\infty$, $P_{y,z}$-a.s. for all $y,z\in S\times \mathbb{R}^d$ or 
$\liminf_{t\rightarrow\infty}\bar{\xi}_t=-\infty$ and $\limsup_{t\rightarrow\infty}\bar{\xi}_t=+\infty$, $P_{y,z}$-a.s. for all 
$y,z\in S\times \mathbb{R}^d$, according as $\kappa_\alpha<0,>0$ or $=0$. Then it follows from Theorem \ref{2461} that
$\zeta<\infty$, $\p_x-a.s.$ for all $x\in E$ or $\zeta=\infty$, $\p_x-a.s.$ for all $x\in E$ according as $\kappa_\alpha<0$ or 
$\kappa_\alpha>0$. The case where $\kappa_\alpha=0$ requires a bit more care and is proved in the following lemma which 
we have not found explicitly stated in the literature. 

\begin{lemma}\label{3596}
Assume that conditions $(a)$, $(b)$ and $(c)$ are satisfied. 
\begin{itemize}
\item[$(i)$] If $\kappa_\alpha<0$, then $\int_0^\infty e^{\bar{\xi}_s}\,ds<\infty$, $P_{y,z}$-a.s. for all $y,z\in S\times \mathbb{R}^d$,
\item[$(ii)$] If $\kappa_\alpha\ge0$, then $\int_0^\infty e^{\bar{\xi}_s}\,ds=\infty$, $P_{y,z}$-a.s. for all $y,z\in S\times \mathbb{R}^d$.
\end{itemize}
\end{lemma}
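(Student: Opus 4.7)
The plan is to separate three regimes according to the sign of $\kappa_\alpha$, since the Perron--Frobenius derivative completely controls the growth of $\bar{\xi}$ via Proposition \ref{2690}. The case $\kappa_\alpha>0$ is immediate: Proposition \ref{2690} gives $\bar{\xi}_t\to+\infty$ a.s., whence $e^{\bar{\xi}_s}\to+\infty$ and the integral is trivially infinite. For $(i)$, where $\kappa_\alpha<0$, the same proposition yields $\bar{\xi}_t/t\to\kappa_\alpha$ a.s., so there is an almost surely finite $t_0$ with $\bar{\xi}_s\le(\kappa_\alpha/2)s$ for all $s\ge t_0$; then $\int_{t_0}^{\infty}e^{\bar{\xi}_s}\,ds$ is dominated by a convergent exponential integral, while $\int_0^{t_0}e^{\bar{\xi}_s}\,ds$ is finite because $\bar{\xi}$ is c\`adl\`ag hence bounded on $[0,t_0]$.

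The substantive work is in the critical case $\kappa_\alpha=0$, where $\bar{\xi}$ oscillates and $\limsup_{t\to\infty}\bar{\xi}_t=+\infty$ a.s. The strategy is to extract divergence from repeated upcrossings of increasing levels. I would inductively define $\tau_0=0$ and, for a small constant $\epsilon>0$ to be chosen,
\[
\tau_k=\inf\{t\ge\tau_{k-1}+\epsilon:\bar{\xi}_t\ge k\},\qquad k\ge1,
\]
so that each $\tau_k$ is a stopping time, a.s.~finite by oscillation, with $\bar{\xi}_{\tau_k}\ge k$. Set
\[
\sigma_k=\inf\{s\ge0:\bar{\xi}_{\tau_k+s}-\bar{\xi}_{\tau_k}\le-1\},\qquad A_k=\{\sigma_k>\epsilon\}.
\]
By the strong Markov property of the MAP, conditionally on $\mathcal{F}_{\tau_k}$ the variable $\sigma_k$ has the law of $\tau_{-1}:=\inf\{t:\bar{\xi}_t\le-1\}$ under $\overline{P}_{J_{\tau_k},0}$. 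Right continuity of $\bar{\xi}$ at $0$ gives $\overline{P}_{j,0}(\tau_{-1}>\epsilon)\to1$ as $\epsilon\downarrow0$ for each $j\in S$, and since $S$ is finite we can fix a single $\epsilon>0$ with $\overline{P}_{j,0}(\tau_{-1}>\epsilon)\ge1/2$ uniformly in $j\in S$. This yields the uniform bound $\p(A_k\mid\mathcal{F}_{\tau_k})\ge1/2$.

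Since $A_k\in\mathcal{F}_{\tau_k+\epsilon}$ and $\tau_k+\epsilon\le\tau_{k+1}$, L\'evy's conditional Borel--Cantelli lemma applied to the increasing filtration $(\mathcal{F}_{\tau_k+\epsilon})_{k\ge1}$ gives $\p(A_k\text{ i.o.})=1$. On $A_k$, $\bar{\xi}_s\ge k-1$ throughout $[\tau_k,\tau_k+\epsilon]$, so
\[
\int_{\tau_k}^{\tau_k+\epsilon}e^{\bar{\xi}_s}\,ds\ge\epsilon e^{k-1},
\]
and since the intervals $[\tau_k,\tau_k+\epsilon]$ are disjoint by construction, summing over the $k$'s for which $A_k$ occurs establishes $\int_0^{\infty}e^{\bar{\xi}_s}\,ds=\infty$ a.s., closing $(ii)$ in the critical case.

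The main obstacle is precisely the critical case $\kappa_\alpha=0$: $\bar{\xi}$ neither drifts nor decays, so a pointwise comparison is unavailable and the divergence must be harvested from the oscillations. The delicate ingredient is combining the MAP strong Markov property, the right continuity of $\bar{\xi}$ at~$0$, and the finiteness of~$S$ to produce a lower bound on $\p(A_k\mid\mathcal{F}_{\tau_k})$ that is uniform in $k$ and in the (random) state $J_{\tau_k}$, which is what makes conditional Borel--Cantelli applicable.
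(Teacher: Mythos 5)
Your proof is correct, and in the only nontrivial regime, $\kappa_\alpha=0$, it takes a genuinely different route from the paper. The paper fixes a single level $a>0$, introduces the successive up- and down-crossing times $\tau_a^{(n)},\sigma_a^{(n)}$ of that level, bounds the integral below by $e^a\sum_n(\sigma_a^{(n)}-\tau_a^{(n)})$, and shows this sum diverges by observing that under the invariant initial law $\pi$ of $J$ the excursion durations form a stationary sequence of a.s.\ positive variables, then transfers the conclusion to arbitrary $P_{y,z}$ via the Markov property and the convergence of $J_{\tau_a^{(n)}}$ to $\pi$. You instead climb through increasing levels $k$, and replace the stationarity argument by a quantitative minorization: the strong Markov (additive) property reduces $\p(A_k\mid\mathcal{F}_{\tau_k})$ to $\overline{P}_{j,0}(\tau_{-1}>\epsilon)$, right continuity at $0$ plus finiteness of $S$ give a uniform lower bound $1/2$ for a fixed $\epsilon$, and L\'evy's conditional Borel--Cantelli lemma then produces infinitely many disjoint intervals of length $\epsilon$ on which $e^{\bar{\xi}}\ge e^{k-1}\ge1$. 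Your version buys a proof that works directly under every $P_{y,z}$ without passing through the stationary law or the ergodic behavior of $(J_{\tau_a^{(n)}})_n$, at the cost of invoking the strong Markov additive property at the stopping times $\tau_k$ (which holds here since the MAP is a Hunt process, and which the paper also implicitly uses); the paper's version isolates the structural fact that the total occupation time of $\{\bar{\xi}>a\}$ is infinite, which is slightly more information than you extract. Both arguments are complete; the small points you should make sure are justified in a written version are that $\bar{\xi}_{\tau_k}\ge k$ (right continuity at the hitting time of the closed set $[k,\infty)$) and that the a.s.\ finiteness of every $\tau_k$ follows from $\limsup_{t\to\infty}\bar{\xi}_t=+\infty$, which you correctly attribute to the oscillation dichotomy in the case $\kappa_\alpha=0$.
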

\begin{proof} The cases where $\kappa_\alpha<0$ and $\kappa_\alpha>0$ follow directly from (\ref{7695}). Let us assume now that 
$\kappa_\alpha=0$. Then as recalled above, $\liminf_{t\rightarrow\infty}\bar{\xi}_t=-\infty$ and 
$\limsup_{t\rightarrow\infty}\bar{\xi}_t=+\infty$, $P_{y,z}$-a.s. for all $y,z\in S\times \mathbb{R}^d$. 
Fix $a>0$ and let  $\tau_{a}^{(n)}=\inf\{t\ge\sigma_{a}^{(n-1)}:\bar{\xi}_t>a\}$ and $\sigma_{a}^{(n)}=
\inf\{t\ge\tau_{a}^{(n)}:\bar{\xi}_t<a\}$, $n\ge0$, with  $\sigma_{a}^{(0)}=0$. Then $\tau_{a}^{(n)}$ is a sequence of stopping times in 
the filtration $(\mathcal{G}_t)_{t\ge0}$ of $\{(J,\xi),P_{y,z}\}$ such that $\lim_{n\rightarrow\infty}\tau_a^{(n)}=+\infty$, $P_{y,z}$-a.s. for 
all $y,z\in S\times \mathbb{R}^d$.  Moreover,
\begin{equation}\label{5611}
\int_0^\infty e^{\bar{\xi}_s}\,ds\ge\sum_{n=1}^\infty\int_{\tau_a^{(n)}}^{\sigma_a^{(n)}}e^{\bar{\xi}_s}\,ds\ge 
\exp(a)\sum_{n=1}^\infty(\sigma_a^{(n)}-\tau_a^{(n)})\,.
\end{equation}
Recall that $J$ is irreducible and let $\pi$ be its invariant measure on $S$. We derive from the definition of MAP's that the sequence 
$\sigma_a^{(n)}-\tau_a^{(n)}$, $n\ge0$ is stationary under $P_{\pi,0}$ and since $P_{\pi,0}(\sigma_a^{(1)}-\tau_a^{(1)}>0)=1$,
we obtain that $P_{\pi,0}(\sum_{n=1}(\sigma_a^{(n)}-\tau_a^{(n)})=\infty)=1$. On the other hand, from the Markov property, for all 
$y\in S$ and $n\ge1$,
\[P_{y,0}\left(\sum_{k=1}^\infty(\sigma_a^{(k)}-\tau_a^{(k)})=\infty\right)=
E_{y,0}\left(P_{J_{\tau_a^{(n)}},0}\left(\sum_{k=1}^\infty(\sigma_a^{(k)}-\tau_a^{(k)})=\infty\right)\right).\] 
Since, the finite valued Markov chain $(J_{\tau_a^{(n)}})_{n\ge1}$ converges in law to $\pi$, by letting $n$ go to $\infty$ in this equality, 
we obtain that for all $y\in S$, 
$P_{y,0}\left(\sum_{k=1}^\infty(\sigma_a^{(k)}-\tau_a^{(k)})=\infty\right)=
P_{\pi,0}(\sum_{n=1}^\infty(\sigma_a^{(n)}-\tau_a^{(n)})=\infty)=1$, so that from inequality (\ref{5611}), 
$\int_0^\infty e^{\bar{\xi}_s}\,ds=\infty$, $P_{y,0}$-a.s. for all $y\in S$. But the definition of MAP's clearly implies that this holds 
$P_{y,z}$-a.s.~for all $y,z\in S\times \mathbb{R}^d$.
\end{proof}
\noindent Note that the trichotomy of Lemma \ref{3596} holds for any MAP with values in $F\times \mathbb{R}$, where $F$ is any 
finite set.\\

Recall from Subsection \ref{3656} the notation 
\[\lim_{t\uparrow\zeta} X_t=X_{\zeta-},\]
when this limit exists and where $\zeta$ is supposed to be finite or infinite. We remind that this limit is to be understood in the 
topology of the compact space $E_0$ and that, as already observed before Proposition \ref{3673}, when $\zeta<\infty$, the 
existence of $X_{\zeta-}$ is guaranteed by the fact that $\{X,\p_x\}$ is a Hunt process. Recall also the definition of $\chi'_k(0)$ 
from Subsection \ref{2004}.\\

\begin{theorem}\label{2774}
Assume that conditions $(a)$, $(b)$ and $(c)$ are satisfied. 
\begin{itemize}
\item[$(i)$] If $\kappa_\alpha<0$, then $\p_x(\zeta<\infty)=1$, for all $x\in E$ and if $\kappa_\alpha\ge0$, then 
$\p_x(\zeta=\infty)=1$, for all $x\in E$. 
\item[$(ii)$]  If one of the following conditions is satisfied: 
\begin{itemize}
\item[$(a)$]  $\chi_k'(0)<0$ or $\chi'_k(0)>0$, for some $k=1,\dots,d$,
\item[$(b)$]  $\chi_k'(0)=0$, for all $k=1,\dots,d$ and $\kappa_\alpha<0$,
\item[$(c)$] $\chi_k'(0)=0$, for all $k=1,\dots,d$ and $\kappa_\alpha>0$, 
\end{itemize}
then $\p_x(X_{\zeta-}=0)=1$, for all $x\in E$.  
\item[$(iii)$] For all $x\in E$, $\p_x(X_{\zeta-}\;\mbox{exists and}\;X_{\zeta-}\in \mathbb{R}^d\setminus\{0\})=0$. 
\end{itemize}
\end{theorem}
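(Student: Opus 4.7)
The plan is to funnel everything through the Lamperti representation of Theorem~\ref{2461}. Under assumption $(a)$ the MAP has infinite lifetime, so $\zeta=\int_0^\infty e^{\bar{\xi}_s}\,ds$, the time change $\tau_t\uparrow\infty$ as $t\uparrow\zeta$, and for every $i$, $|X_t^{(i)}|=\exp(\xi^{(i)}_{\tau_t})$. Part $(i)$ is then a direct corollary of Lemma~\ref{3596} applied to the $S\times\mathbb{R}$-valued MAP $(J,\bar{\xi})$, whose Perron--Frobenius eigenvalue is $\kappa_\alpha$.

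For part $(ii)$, I would use the characterization~(\ref{3697}), which rewrites $X_t\to 0$ in $E_0$ as $\max_i|\xi^{(i)}_{\tau_t}|\to\infty$. Since $\tau_t\to\infty$, it suffices to prove $\max_i|\xi^{(i)}_s|\to\infty$ a.s.\ as $s\to\infty$. In case $(a)$, Proposition~\ref{2690} immediately yields $|\xi^{(k)}_s|\to\infty$ on the coordinate with $\chi'_k(0)\neq 0$. In cases $(b)$ and $(c)$, $\kappa_\alpha\neq 0$, so the trichotomy for $\bar{\xi}$ recalled just before Lemma~\ref{3596} gives $|\bar{\xi}_s|\to\infty$ a.s.; the crude bound $|\bar{\xi}_s|\le(\alpha_1+\dots+\alpha_d)\max_i|\xi^{(i)}_s|$ then transfers the divergence to at least one coordinate. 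This multidimensional step is the main obstacle I anticipate: vanishing individual drifts $\chi'_k(0)=0$ do not preclude coordinates oscillating around zero, and the point is that $\kappa_\alpha\neq 0$ combined with the linear bound above forces some coordinate to escape to $\pm\infty$.

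For part $(iii)$, I would argue by contradiction. Suppose that on an event of positive probability the Euclidean limit $X_{\zeta-}$ exists and equals some $a\in\mathbb{R}^d\setminus\{0\}$; partitioning over the discrete index, I may assume $a_i\neq 0$ for a fixed $i$. Then $\exp(\xi^{(i)}_{\tau_t})\to|a_i|\in(0,\infty)$ would force $\xi^{(i)}_{\tau_t}\to\log|a_i|$, a finite random limit, and since $\tau_t\to\infty$ this would mean $\xi^{(i)}_s$ has a finite limit along this positive-probability event. But assumption $(c)$ and Proposition~\ref{2690} guarantee that almost surely every $\xi^{(i)}$ either diverges to $\pm\infty$ or oscillates with $\limsup=+\infty$ and $\liminf=-\infty$, leaving no room for a finite limit. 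This contradiction closes the argument.
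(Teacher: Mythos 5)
Your proposal is correct and follows essentially the same route as the paper: part $(i)$ via Lemma~\ref{3596}, part $(ii)$ by showing some coordinate of $\xi$ (or, in cases $(b)$--$(c)$, the combination $\bar{\xi}$) escapes to infinity so that $X_t\to0$ in $E_0$ by the criterion (\ref{3697}), and part $(iii)$ by the trichotomy of Proposition~\ref{2690} excluding a finite nonzero limit for any $\xi^{(k)}_{\tau_t}$. Your explicit bound $|\bar{\xi}_s|\le(\alpha_1+\dots+\alpha_d)\max_i|\xi^{(i)}_s|$ merely makes precise a step the paper leaves implicit ("hence $X_t$ tends to $0$ in the topology of $E_0$"), and is a welcome clarification rather than a departure.
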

\begin{proof} First is it useful to observe that under our assumptions, since $\zeta_*=\infty$, $P_{y,z}$-a.s., for all 
$y,z\in S\times\mathbb{R}^d$,
\begin{equation}\label{8867}
\lim_{t\uparrow\zeta}\tau_t=\infty,\;\;\;\mbox{$\p_{y,z}$-a.s., for all $y,z\in S\times\mathbb{R}^d$.}
\end{equation}
The first assertion is an immediate consequence of the expression $\zeta=\int_0^\infty e^{\bar{\xi}_s}\,ds$ in Theorem \ref{2461} 
and Lemma \ref{3596}. The second one follows from the expression $X_t=\varphi(J_{\tau_t},\xi_{\tau_t})$. Indeed, if $\chi_k'(0)<0$ 
or $\chi'_k(0)>0$, for some $k=1,\dots,d$, then from Proposition XI.2.10 of \cite{LT8}, as $t$ tends to $\zeta$, the coordinate 
$X^{(k)}_t=J^{(k)}_{\tau_t}e^{\xi^{(k)}_{\tau_t}}$ tends to 0 if $\chi_k'(0)<0$ and to $\infty$ if $\chi_k'(0)>0$, $P_{y,z}$-a.s. for all
$(y,z)\in S\times\mathbb{R}^d$. Therefore $X_t$ tends to 0 as $t$ tends to $\zeta$, in the topology of the compact set $E_0$, 
$\p_x$-a.s. for all $x\in E$. If $\chi_k'(0)=0$, for all $k=1,\dots,d$ and $\kappa_\alpha<0$ (resp. $\kappa_\alpha>0$), then from 
Proposition XI.2.10 of \cite{LT8}, the process $|(X_t^{(1)})^{\alpha_1}\dots(X_t^{(d)})^{\alpha_d}|=e^{\bar{\xi}_{\tau_t}}$ tends to 
0 (resp. to $\infty$) as $t$ tends to $\infty$. Hence $X_t$ tends to 0 in the topology of $E_0$.\\

Let us now prove $(iii)$. Assume that $x=\varphi(y,z)$ is such that $X_{\zeta-}$ exists $\p_x$-a.s. then for any $k=1,\dots,d$,
 from  (\ref{8867}) and Proposition XI.2.10 of \cite{LT8}, the process $\xi^{(k)}_{\tau_t}$ either tends to $-\infty$ or to $+\infty$ or
oscillates $\p_x$-a.s.  as $t$ tends to $\zeta$. Therefore the limit 
$\lim_{t\uparrow\zeta}X^{(k)}_t=\lim_{t\uparrow\zeta}J^{(k)}_{\tau_t}e^{\xi^{(k)}_{\tau_t}}$ cannot belong to 
$\mathbb{R}\setminus\{0\}$.
\end{proof}

\noindent Note that parts  $(i)$ and $(ii)$ of Theorem \ref{2774} extend Proposition \ref{3673} to the case where $E$ 
is any state space, but with additional assumptions.  It seems that it is not possible to conclude in the case where $\chi_k'(0)=0$, 
for all $k=1,\dots,d$ and $\kappa_\alpha=0$. We are only able to construct examples such that for all $x\in E$, $X_t$ has no 
limit $\p_x$-a.s., when $t$ tends $\infty$. Note also that part $(iii)$ of Theorem \ref{2774} completes part 1.~of Proposition 
\ref{2683} where it was proved that the set $\{x\in\mathbb{R}^d\setminus\{0\}:x_1x_2\dots x_d=0\}$ is absorbing. Actually under 
our assumptions this set is a.s.~never attained.\\  

\begin{remark}   
It is important to note that $\{X,\p_x\}$ has finite or infinite lifetime depending on the valued of $\alpha$. Changing 
$\alpha$ may change a finite lifetime to an infinite one. This makes another difference with self-similar Markov processes, 
where the index can be changed simply by raising the process to some power. Then the lifetime remains unchanged.  
However, recall that $\{X,\p_x\}$ is a self-similar Markov process with index $\alpha_1+\dots+\alpha_d$. Therefore the finiteness 
of the lifetime of $\{X,\p_x\}$ does not depend on the sum $\alpha_1+\dots+\alpha_d$.
\end{remark}

%%%%%%%%%%%%%%%%%%%%%%%%%%%%%%%%%%%%%%%%%%%%%%%%%%%%%%%%%
\newpage
%%%%%%%%%%%%%%%%%%%%%%%%%%%%%%%%%%%%%%%%%%%%%%%%%%%%%%%%%

\end{document}